\theoremstyle{plain}
\newtheorem{theorem}{Theorem}
\newtheorem{corollary}[theorem]{Corollary}
\newtheorem{lemma}[theorem]{Lemma}
\newtheorem{proposition}[theorem]{Proposition}
\newtheorem{problem}[theorem]{Problem}
\newtheorem{conjecture}[theorem]{Conjecture}
\theoremstyle{definition}
\newtheorem{definition}[theorem]{Definition}
\newtheorem{example}[theorem]{Example}
\newtheorem{remark}[theorem]{Remark}
\newtheorem{question}[theorem]{Question}
\DeclareMathOperator{\card}{card}
\DeclareMathOperator{\id}{id}
\newcommand{\acr}{\newline\indent}
\begin{document}

\title{On monoids of metric preserving functions}

\author{Viktoriia Bilet}
\address{\textbf{Viktoriia Bilet}\acr
Department of Theory of Functions \acr
Institute of Applied Mathematics and Mechanics of NASU\acr
Dobrovolskogo str. 1, Slovyansk 84100, Ukraine}
\email{viktoriiabilet@gmail.com}

\author{Oleksiy Dovgoshey}
\address{\textbf{Oleksiy Dovgoshey}\acr
Department of Theory of Functions \acr
Institute of Applied Mathematics and Mechanics of NASU \acr
Dobrovolskogo str. 1, Slovyansk 84100, Ukraine \acr
and\acr
University of Turku\acr
FI-200014 Turun yliopisto, Finland}

\email{oleksiy.dovgoshey@gmail.com; oleksiy.dovgoshey@utu.fi}

\subjclass[2020]{Primary 26A30, Secondary 54E35,\, 20M20}
\keywords{Metric preserving function, subadditive function, ultrametric preserving function, monoid.}

\begin{abstract}
Let $\mathbf{X}$ be a class of metric spaces and let $\mathbf{P}_{\mathbf{X}}$ be the set of all $f:[0, \infty)\to [0, \infty)$ preserving $\mathbf{X},$ $(Y, f\circ\rho)\in\mathbf{X}$ whenever $(Y, \rho)\in\mathbf{X}.$ For arbitrary subset $\mathbf{A}$ of the set of all metric preserving functions we show that the equality $\mathbf{P}_{\mathbf{X}}=\mathbf{A}$ has a solution iff $\mathbf{A}$ is a monoid with respect to the operation of function composition. In particular, for the set $\mathbf{SI}$ of all amenable subadditive increasing functions there is a class $\mathbf{X}$ of metric spaces such that $\mathbf{P}_{\mathbf{X}}=\mathbf{SI}$ holds, which gives a positive answer to the question of paper \cite{Dov24}.
\end{abstract}

\maketitle

\section{Introduction}

The following is a particular case of the concept introduced by Jacek Jachymski and Filip Turo\-bo\'{s} in \cite{JTRRACEFNSAMR2020}.

\begin{definition}\label{def1}
Let $\mathbf{A}$ be a class of metric spaces. Let us denote by $\mathbf{P}_{\mathbf{A}}$ the set of all functions $f:[0, \infty)\to [0, \infty)$ such that the implication
\begin{equation*}\label{eqdef1}
\left((X, d)\in\mathbf{A}\right) \Rightarrow \left((X, f\circ d)\in\mathbf{A}\right)
\end{equation*}
is valid for every metric space $(X, d).$
\end{definition}

For mappings $F: X\to Y$ and $\Phi: Y\to Z$ we use the symbol $F\circ \Phi$ to denote the mapping
$$X \xrightarrow{F} Y \xrightarrow{\Phi} Z.$$


We also use the following notation:

$\mathbf{F},$ set of functions $f:[0,\infty)\to [0, \infty);$

$\mathbf{F}_{0},$ set of functions $f\in\mathbf{F}$ with $f(0)=0;$

$\mathbf{Am},$ set of amenable $f\in\mathbf{F};$

$\mathbf{SI},$ set of subadditive increasing $f\in\mathbf{Am};$

$\mathbf{M},$ class of metric spaces;

$\mathbf{U},$ class of ultrametric spaces;

$\mathbf{Dis},$ class of discrete metric spaces;

$\mathbf{M}_{2},$ class of two-points metric spaces;

$\mathbf{M}_{1},$ class of one-point metric spaces.








The main purpose of this paper is to give a solution of the following problems.

\begin{problem}\label{probl1}
Let $\mathbf{A}\subseteq\mathbf{P}_{\mathbf{M}}.$ Find conditions under which the equation
\begin{equation}\label{e1.prob1}
\mathbf{P}_{\mathbf{X}}=\mathbf{A}
\end{equation}
has a solution $\mathbf{X}\subseteq\mathbf{M}.$
\end{problem}



\begin{problem}\label{probl3}
Let $\mathbf{A}\subseteq\mathbf{P}_{\mathbf{U}}.$ Find conditions under which equation \eqref{e1.prob1} has a solution $\mathbf{X}\subseteq\mathbf{U}.$
\end{problem}

In addition, we find all solutions to equation \eqref{e1.prob1} for $\mathbf{A}$ equal to $\mathbf{F},$ $\mathbf{F}_{0},$ or $\mathbf{Am}$ and answer the following question.

\begin{question}\label{quest_1}
Is there a subclass $\mathbf{X}$ of the class $\mathbf{M}$ such that $$\mathbf{P}_{\mathbf{X}}=\mathbf{SI}?$$
\end{question}

This question was asked in \cite{Dov24} in a different but equivalent form and it was the original motivation for our research.

The paper is organized as follows. The next section contains some necessary definitions and facts from the theories of metric spaces and metric preserving functions.

In Section~3 we recall some definitions from the semigroup theory and describe solutions to equation \eqref{e1.prob1}, for the cases when $\mathbf{A}$ is $\mathbf{F},$ $\mathbf{F}_{0}$ or $\mathbf{Am}.$ 
In addition, we show that $\mathbf{P}_{\mathbf{X}}$ is always a submonoid of $(\mathbf{F}, \circ).$ See Theorems~\ref{l1}, \ref{Th(1)}, \ref{Th(2)} and Proposition~\ref{propvsp}, respectively.


Solutions to Problems~\ref{probl1} and \ref{probl3} are given, respectively, in Theorems~\ref{mainth} and~\ref{mainth_4} of Section~4. Theorem~\ref{mainth_2} gives a positive answer to Question~\ref{quest_1}.

\section{Preliminaries on metrics and metric preserving functions}


Let $X$ be nonempty set. A function $d: X\times X\to [0, \infty)$ is said to be a \emph{metric} on the set $X$ if for all $x, y, z \in X$ we have:
\begin{itemize}
\item[$(i)$] $d(x, y) \geqslant 0$ with equality if and only if $x = y$, the \emph{positivity property};

\item[$(ii)$] $d(x, y) = d(y, x)$, the \emph{symmetry property};

\item[$(iii)$] $d(x, y) \leqslant d(x, z) + d(z, y)$, the \emph{triangle inequality}.
\end{itemize}



A metric space $(X,d)$ is \emph{ultrametric} if the \emph{strong triangle inequality}
$$
d(x,y) \leqslant \max \{ d(x,z), d(z,y) \}
$$
holds for all $x,y,z \in X$.

\begin{example}\label{example_19}
Let us denote by $\mathbb{R}_{0}^{+}$ the set $(0, \infty)$. Then the mapping $d^+ \colon \mathbb{R}_{0}^{+} \times \mathbb{R}_{0}^{+} \to [0, \infty),$
$$
d^+(p,q) := \left\{
\begin{array}{ll}
0 & \quad \hbox{if}\quad p = q, \\
\max \{p,q\} & \quad \hbox{otherwise}.
\end{array}
\right.
$$
is the ultrametric on $\mathbb{R}_{0}^{+}$ introduced by C.~Delhomm\'{e}, C.~Laflamme, M.~Pouzet, and N.~Sauer in \cite{DLPS2008TaiA}.
\end{example}







\begin{definition}\label{defdiscr}
Let $(X, d)$ be a metric space. The metric $d$ is \emph{discrete} if there is $k\in (0, \infty)$ such that \begin{equation*}\label{dis} d(x, y)=k \end{equation*} for all distinct $x,y\in X.$
\end{definition}

In what follows we will say that a metric space $(X, d)$ is discrete if $d$ is a discrete metric on $X.$ We will denote by $\mathbf{Dis}$ the class of all discrete metric space. In addition, for given nonempty set $X_{1},$ we will denote by $\mathbf{Dis}_{X_1}$ the subclass of $\mathbf{Dis}$ consisting of all metric spaces $(X_1, d)$ with discrete $d.$

\begin{example}\label{ex(9)}
Let $\mathbf{M}_k,$ for $k=1,2,$ be the class of all metric spaces $(X, d)$ satisfying the equality $$\card(X)=k.$$ Then all metric spaces belonging to $\mathbf{M}_{1}\cup\mathbf{M}_{2}$ are discrete.
\end{example}

\begin{proposition}\label{Pr(10)}
The following statements are equivalent for each metric space $(X,d)\in\mathbf{M}.$
\begin{itemize}
\item[$(i)$] $(X,d)$ is discrete.
\item[$(ii)$] Every three-point subspace of $(X,d)$ is discrete.
\end{itemize}
\end{proposition}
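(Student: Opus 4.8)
The plan is to establish the two implications separately, with $(i)\Rightarrow(ii)$ being essentially immediate and $(ii)\Rightarrow(i)$ reducing to a short case analysis based on the elementary observation that a three-point metric space is discrete precisely when its three pairwise distances all coincide (an "equilateral triangle").

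For $(i)\Rightarrow(ii)$: if there is $k\in(0,\infty)$ with $d(x,y)=k$ for all distinct $x,y\in X$, then the restriction of $d$ to any three-point subset $\{p,q,r\}\subseteq X$ still satisfies $d(p,q)=d(q,r)=d(p,r)=k$, so that subspace is discrete by Definition~\ref{defdiscr}.

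For $(ii)\Rightarrow(i)$: First dispose of the trivial cases $\card(X)\leqslant 2$: a one-point space is vacuously discrete, and in a two-point space $\{x,y\}$ the single positive value $d(x,y)$ serves as the required constant. So assume $\card(X)\geqslant 3$. Fix two distinct points $a,b\in X$ and put $k:=d(a,b)$, which is positive by the positivity property $(i)$ of a metric. It suffices to prove $d(x,y)=k$ for every pair of distinct points $x,y\in X$ with $\{x,y\}\neq\{a,b\}$. If the pairs $\{a,b\}$ and $\{x,y\}$ meet in exactly one point, say $x\in\{a,b\}$ and $y\notin\{a,b\}$, then $\{a,b,y\}$ consists of three distinct points, so its subspace is discrete and hence $d(x,y)=d(a,b)=k$. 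If the pairs are disjoint, first apply discreteness of the subspace on the three distinct points $\{a,b,x\}$ to obtain $d(a,x)=d(a,b)=k$, and then apply discreteness of the subspace on $\{a,x,y\}$ to obtain $d(x,y)=d(a,x)=k$. In all cases $d(x,y)=k$, so $d$ is a discrete metric on $X$.

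No deep obstacle arises here; the only points needing a little care are the separate handling of spaces with fewer than three points (where hypothesis $(ii)$ carries no content) and checking, at each step of the case analysis, that the three selected points are pairwise distinct so that "three-point subspace" applies legitimately.
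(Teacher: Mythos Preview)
Your proof is correct and follows essentially the same idea as the paper's: both arguments propagate equality of distances through overlapping three-point subspaces. The paper phrases $(ii)\Rightarrow(i)$ by contradiction (assuming $d(i,j)\neq d(k,l)$ and bridging via $\{i,j,k\}$ and $\{j,k,l\}$), whereas you give a direct argument fixing a reference pair $\{a,b\}$, but the underlying mechanism is identical and your case analysis is, if anything, slightly more careful about verifying that the chosen triples are genuinely three-point sets.
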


\begin{proof}
The implication $(i)\Rightarrow(ii)$ is evidently valid.

Suppose that $(ii)$ holds but $(X,d)\not\in\mathbf{Dis}.$ Then there are some different points $i,j,k,l\in X$ such that
\begin{equation}\label{eq1_pr}
d(i,j)\ne d(k,l).
\end{equation}
Write $X_{1}:=\{i,j,k\}$ and $X_{2}:=\{j,k,l\}.$ Then the spaces $(X_{1}, d|_{X_{1}\times X_{1}})$ and $(X_{2}, d|_{X_{2}\times X_{2}})$ are discrete subspaces of $(X,d)$ by statement $(ii)$. Consequently we have
\begin{equation}\label{eq2_pr}
d(i,j)= d(j,k)
\end{equation} and
\begin{equation}\label{eq3_pr}
d(j,k)= d(k,l)
\end{equation}
by definition of the class $\mathbf{Dis}.$ Now \eqref{eq2_pr} and \eqref{eq3_pr} give us $$d(i,j)=d(k,l),$$ which contradicts \eqref{eq1_pr}.
\end{proof}

\begin{remark}
The standard definition of discrete metric can be formulated as: ``The metric on X is discrete if the distance from each point of
$X$ to every other point of $X$ is one.'' (See, for example, \cite[p.~14]{Sea2007}.)
\end{remark}

Let $\mathbf{F}$ be the set of all functions $f: [0, \infty)\to [0, \infty).$

\begin{definition} A function $f \in\mathbf{F}$ is \emph{metric preserving} (\emph{ultrametric preserving}) iff $f \in \mathbf{P}_{\mathbf{M}}$ ($f \in \mathbf{P}_{\mathbf{U}}$). \end{definition}

\begin{remark}\label{rem1}
The concept of metric preserving functions can be traced back to Wilson \cite{Wilson1935}. Similar problems were
considered by Blumenthal in \cite{Blumenthal1936}. The theory of metric preserving functions was developed by Bors\'{\i}k, Dobo\v{s}, Piotrowski, Vallin and other mathematicians \cite{BD1981MS, Borsik1988, Dobos1996, Dobos1994, Dobos1996a, Dobos1997, DM2013, V1997RAE, V1998AMUC, V1998IJMMS, Vallin2000, PT2014FPTA, V2002TMMP}.
See also lectures by Dobo\v{s} \cite{Dobos1998}, and the introductory paper by Corazza \cite{Corazza1999}. The study of ultrametric preserving functions begun by P.~Pongsriiam and I.~Termwuttipong in 2014~\cite{PTAbAppAn2014} and was continued in \cite{Dov2020MS, VD2021MS}.
\end{remark}

We will say that $f \in\mathbf{F}$ is \emph{amenable} iff
\begin{equation*}\label{eq2.1}
f^{-1}(0)=\{0\}
\end{equation*}
holds and will denote by $\mathbf{Am}$ the set of all amenable functions from $\mathbf{F}.$
Let us denote by $\mathbf{F}_{0}$ the set of all functions $f\in\mathbf{F}$ satisfying the equality $f(0)=0.$ It follows directly from the definition that $\mathbf{Am}\subsetneq \mathbf{F}_{0}\subsetneq\mathbf{F}.$

Moreover, a function $f\in\mathbf{F}$ is \emph{increasing} iff the implication $$(x\leqslant y)\Rightarrow (f(x)\leqslant f(y))$$ is valid for all $x,y\in [0, \infty).$

The following theorem was proved in \cite{PTAbAppAn2014}.

\begin{theorem}\label{t2.4}
A function $f \in\mathbf{F}$ is ultrametric preserving if and only if \(f\) is increasing and amenable.
\end{theorem}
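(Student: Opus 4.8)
The plan is to prove both implications directly, using the characterization of ultrametric spaces via the strong triangle inequality and exploiting the two-point and three-point spaces as test objects.

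For the "only if" direction, suppose $f$ is ultrametric preserving. To see that $f$ is amenable, I would first observe that $f(0) = 0$ is forced: take any ultrametric space $(X,d)$ with at least two points; if $f(0) > 0$, then $(f\circ d)(x,x) = f(0) > 0$ for every $x$, violating the positivity property, so $(X, f\circ d)$ is not even a metric space. Conversely, if $f(t) = 0$ for some $t > 0$, I would construct a concrete ultrametric space realizing the distance $t$ between two distinct points---for instance a two-point space $(\{a,b\}, d)$ with $d(a,b) = t$, which is trivially ultrametric---and note that $f\circ d$ then assigns distance $0$ to distinct points, again destroying positivity. Hence $f^{-1}(0) = \{0\}$, i.e. $f \in \mathbf{Am}$. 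To see that $f$ is increasing, I would use the Delhommé--Laflamme--Pouzet--Sauer ultrametric $d^+$ from Example~\ref{example_19}: given $0 < p < q$, consider the three-point subspace $\{p, q, r\}$ of $(\mathbb{R}_0^+, d^+)$ for a suitable third point, or more simply a three-point ultrametric space with distances $p$, $q$, $q$ (which satisfies the strong triangle inequality since the two largest distances coincide). Applying $f$ must again yield an ultrametric, and the strong triangle inequality applied to the appropriate triple forces $f(p) \leqslant \max\{f(q), f(q)\} = f(q)$; arranging the configuration correctly gives monotonicity.

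For the "if" direction, suppose $f$ is increasing and amenable, and let $(X,d)$ be an ultrametric space; I must show $(X, f\circ d)$ is an ultrametric space. Positivity follows from amenability: $f(d(x,y)) = 0 \iff d(x,y) = 0 \iff x = y$. Symmetry is immediate from symmetry of $d$. For the strong triangle inequality, fix $x,y,z \in X$. Since $d$ is an ultrametric, $d(x,y) \leqslant \max\{d(x,z), d(z,y)\}$, and because $f$ is increasing, $f(d(x,y)) \leqslant f(\max\{d(x,z), d(z,y)\}) = \max\{f(d(x,z)), f(d(z,y))\}$, where the last equality holds because an increasing function commutes with $\max$. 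This is exactly the strong triangle inequality for $f \circ d$, so $(X, f\circ d) \in \mathbf{U}$.

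The main obstacle is the monotonicity half of the "only if" direction: one must be careful to build an ultrametric space in which an arbitrary pair $p < q$ of positive reals occurs as distances in a configuration where the strong triangle inequality, after composing with $f$, genuinely forces $f(p) \leqslant f(q)$ rather than a vacuous inequality. The clean way is to take the three-point space with distance set $\{p, q, q\}$: this is ultrametric, and requiring $(X, f\circ d)$ to be ultrametric forces $f(p) \leqslant \max\{f(q), f(q)\} = f(q)$ together with $f(q) \leqslant \max\{f(p), f(q)\}$, the first of which is the desired inequality. Since $p < q$ were arbitrary positive reals, and $f(0) = 0 \leqslant f(t)$ trivially for $t \geqslant 0$, this establishes that $f$ is increasing on all of $[0,\infty)$. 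Everything else is routine verification of the metric axioms.
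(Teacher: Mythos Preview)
The paper does not actually give its own proof of Theorem~\ref{t2.4}; it simply cites the result from Pongsriiam and Termwuttipong~\cite{PTAbAppAn2014}. So there is no in-paper argument to compare against.

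Your proof proposal is correct and is essentially the standard argument. The ``if'' direction is routine and fully justified: amenability gives positivity, symmetry is inherited, and monotonicity of $f$ pushes the strong triangle inequality through because $f(\max\{a,b\}) = \max\{f(a),f(b)\}$ for increasing $f$. For the ``only if'' direction, your use of a two-point space to force $f^{-1}(0)=\{0\}$ is exactly right, and the three-point ultrametric space with distance multiset $\{p,q,q\}$ for $0<p<q$ is the clean and correct test object for monotonicity: the strong triangle inequality for $f\circ d$ at the vertex opposite the side of length $p$ reads $f(p)\leqslant\max\{f(q),f(q)\}=f(q)$, which is precisely what you need. Your closing remark covering the boundary case $f(0)=0\leqslant f(t)$ completes the monotonicity on all of $[0,\infty)$.

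One cosmetic point: the initial mention of the Delhomm\'{e}--Laflamme--Pouzet--Sauer space $d^+$ is a slight detour, since you immediately (and rightly) pivot to the explicit three-point construction; in a polished write-up you could drop the reference to $d^+$ here and go straight to the three-point space.
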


\begin{remark}\label{rem_th}
Theorem~\ref{t2.4} was generalized in \cite{Dov2019a} to the special case of the so-called ultrametric distances. These distances were introduced by S.~Priess-Crampe and P.~Ribenboim in 1993 \cite{PR1993AMSUH} and studied in \cite{PR1996AMSUH, PR1997AMSUH, Rib1996PMH, Rib2009JoA}.
\end{remark}

Recall that a function $f \in\mathbf{F}$ is said to be \emph{subadditive} if $$f(x+y)\leqslant f(x)+f(y)$$ holds for all $x,y\in [0, \infty).$ Let us denote by $\mathbf{SI}$ the set of all subadditive increasing functions $f\in\mathbf{Am}.$




Corollary~36 of \cite{Dov24} implies the following result.

\begin{proposition}\label{prop2.1}
The equality
\begin{equation*}
\mathbf{SI}=\mathbf{P}_{\mathbf{U}}\cap\mathbf{P}_{\mathbf{M}}
\end{equation*}
holds.
\end{proposition}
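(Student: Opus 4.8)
The plan is to prove Proposition~\ref{prop2.1} by showing the two inclusions $\mathbf{SI}\subseteq\mathbf{P}_{\mathbf{U}}\cap\mathbf{P}_{\mathbf{M}}$ and $\mathbf{P}_{\mathbf{U}}\cap\mathbf{P}_{\mathbf{M}}\subseteq\mathbf{SI}$ separately. For the first inclusion, suppose $f\in\mathbf{SI}$. Then $f$ is amenable and increasing, so $f\in\mathbf{P}_{\mathbf{U}}$ by Theorem~\ref{t2.4}. Moreover, it is a classical fact in the theory of metric preserving functions that every amenable subadditive increasing function is metric preserving (indeed, subadditivity together with monotonicity yields the triangle inequality: if $d(x,y)\leqslant d(x,z)+d(z,y)$ then $f(d(x,y))\leqslant f(d(x,z)+d(z,y))\leqslant f(d(x,z))+f(d(z,y))$, amenability handles the positivity property, and symmetry is immediate). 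Hence $f\in\mathbf{P}_{\mathbf{M}}$, giving $f\in\mathbf{P}_{\mathbf{U}}\cap\mathbf{P}_{\mathbf{M}}$.

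For the reverse inclusion, let $f\in\mathbf{P}_{\mathbf{U}}\cap\mathbf{P}_{\mathbf{M}}$. From $f\in\mathbf{P}_{\mathbf{U}}$ and Theorem~\ref{t2.4} we immediately get that $f$ is increasing and amenable. It then remains to show that $f$ is subadditive. Here I would invoke Corollary~36 of \cite{Dov24}, which is precisely what the statement of the proposition points to: that corollary already encodes the characterization needed, so the present proposition is essentially a restatement of it in the language of the classes $\mathbf{P}_{\mathbf{U}}$ and $\mathbf{P}_{\mathbf{M}}$. Thus the proof reduces to translating Corollary~36 of \cite{Dov24} into the notation used here, checking that ``$f$ preserves all ultrametric spaces and all metric spaces'' matches the hypotheses of that corollary, and concluding $f\in\mathbf{SI}$.

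The main obstacle is the subadditivity claim in the reverse inclusion: being metric preserving does not by itself force subadditivity (there exist metric preserving functions that are not subadditive, e.g.\ non-monotone concave-type examples), so the extra hypothesis $f\in\mathbf{P}_{\mathbf{U}}$, which delivers monotonicity, is doing essential work. The key point, which is the content of Corollary~36 of \cite{Dov24}, is that an \emph{increasing} metric preserving function must be subadditive. Intuitively, if $f$ were increasing, amenable, metric preserving, yet $f(a+b)>f(a)+f(b)$ for some $a,b>0$, one constructs a suitable metric space — taking the ``linear'' three-point configuration with pairwise distances $a$, $b$, $a+b$ — whose image under $f$ has distances $f(a)$, $f(b)$, $f(a+b)$ violating the triangle inequality, contradicting $f\in\mathbf{P}_{\mathbf{M}}$; the monotonicity is what guarantees the chosen three numbers themselves form a valid metric triple so that the configuration exists. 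Since this argument is exactly Corollary~36 of \cite{Dov24}, I would simply cite it rather than reprove it, keeping the proof of Proposition~\ref{prop2.1} short.
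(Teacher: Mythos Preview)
Your overall strategy matches the paper's: the paper does not prove this proposition at all but simply records it as a consequence of Corollary~36 of \cite{Dov24}, and you likewise reduce to that citation. The forward inclusion $\mathbf{SI}\subseteq\mathbf{P}_{\mathbf{U}}\cap\mathbf{P}_{\mathbf{M}}$ is argued correctly.

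However, your commentary on the reverse inclusion contains a genuine mathematical error. You write that ``being metric preserving does not by itself force subadditivity (there exist metric preserving functions that are not subadditive).'' This is false: \emph{every} metric preserving function is subadditive. The argument you yourself sketch shows this without any monotonicity hypothesis. For $a,b>0$, the three points $0,a,a+b$ on the real line with the Euclidean distance form a metric space with pairwise distances $a$, $b$, $a+b$; no assumption on $f$ is needed for this configuration to exist. If $f\in\mathbf{P}_{\mathbf{M}}$, the triangle inequality for $f\circ d$ yields $f(a+b)\leqslant f(a)+f(b)$ directly. Your sentence ``the monotonicity is what guarantees the chosen three numbers themselves form a valid metric triple'' is therefore mistaken: $a$, $b$, $a+b$ always satisfy the triangle inequality.

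So the reverse inclusion is actually cleaner than you suggest: $f\in\mathbf{P}_{\mathbf{U}}$ gives amenable and increasing (Theorem~\ref{t2.4}), and $f\in\mathbf{P}_{\mathbf{M}}$ alone gives subadditive by the three-point argument above; hence $f\in\mathbf{SI}$. The role of $\mathbf{P}_{\mathbf{U}}$ is to supply monotonicity (which $\mathbf{P}_{\mathbf{M}}$ genuinely does not force), not subadditivity.
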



\begin{remark}\label{rem_1.8}
The metric preserving functions can be considered as a special case of metric products (= metric preserving functions of several variables). See, for example, \cite{BD1981, BFS2003BazAaG, DPK2014MS, FS2002, HMCM1991, Kaz2021CoPS}. An important special class of ultrametric preserving functions of two variables was first considered in 2009~\cite{DM2009}.
\end{remark}

\section{Preliminaries on semigroups. Solutions to $\mathbf{F}_{\mathbf{X}}=\mathbf{A}$ for $\mathbf{A}=\mathbf{F},$ $\mathbf{F}_{0}, \mathbf{Am}$}

Let us recall some basic concepts of semigroup theory, see,~for~exam\-ple, ``Fundamentals of Semigroup Theory'' by John M. Howie \cite{Howie1995}.

A \emph{semigroup} is a pair $(S, \ast)$ consisting of a nonempty set $S$ and an associative operation $\ast: S\times S\to S$ which is called the \emph{multiplication} on $S$.
A semigroup $S=(S, \ast)$ is a \emph{monoid} if there is $e\in S$ such that
$$e\ast s=s\ast e=s$$ for every $s\in S.$ 

\begin{definition}\label{def2}
Let $(S, \ast)$ be a semigroup and $\varnothing\ne T\subseteq S.$ Then $T$ is a \emph{subsemigroup}
of $S$ if $a, b\in T$ $\Rightarrow$ $a\ast b\in T.$ If $(S, \ast)$ is a monoid with the identity $e$, then $T$ is a \emph{submonoid} of $S$ if $T$ is a subsemigroup of $S$ and $e\in T.$
\end{definition}

\begin{example}\label{ex1}
The semigroups $(\mathbf{F}, \circ),$ $(\mathbf{Am}, \circ),$ $(\mathbf{P}_{\mathbf{M}}, \circ)$ and $(\mathbf{P}_{\mathbf{U}}, \circ)$ are monoids and the identical mapping $\textrm{id}:[0,\infty)\to[0,\infty),$ $\textrm{id}(x)=x$ for every $x\in[0,\infty),$ is the identity of these monoids.
\end{example}

The following simple lemmas are well known.

\begin{lemma}\label{newlem1}
Let $T$ be a submonoid of a monoid $(S, \ast)$ and let $V\subseteq T.$ Then $V$ is a submonoid of $(S, \ast)$ if and only if $V$ is a submonoid of $T.$
\end{lemma}

\begin{lemma}\label{newlem2}
Let $T_1$ and $T_2$ be submonoids of a monoid $(S, \ast).$ Then the intersection $T_{1}\cap T_{2}$ also is a submonoid of $(S, \ast).$
\end{lemma}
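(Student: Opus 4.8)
The plan is to verify directly the three conditions from Definition~\ref{def2}: that $T_1 \cap T_2$ is nonempty, that it is closed under $\ast$, and that it contains the identity $e$ of $(S, \ast)$. First I would observe that, since $T_1$ and $T_2$ are submonoids, each of them contains $e$ by definition; hence $e \in T_1 \cap T_2$, which in particular shows the intersection is nonempty. Next, given $a, b \in T_1 \cap T_2$, I would use that $a, b \in T_1$ together with the subsemigroup property of $T_1$ to conclude $a \ast b \in T_1$, and symmetrically $a \ast b \in T_2$; therefore $a \ast b \in T_1 \cap T_2$, so $T_1 \cap T_2$ is a subsemigroup of $(S, \ast)$.

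Combining these two observations, $T_1 \cap T_2$ is a subsemigroup of $(S, \ast)$ containing $e$, which is exactly the definition of a submonoid of $(S, \ast)$. There is no real obstacle here: the argument is a routine unwinding of the definitions, and the only thing to be slightly careful about is to record explicitly that the intersection is nonempty (guaranteed by $e$ lying in it), since that is part of the definition of a subsemigroup. One could alternatively phrase the whole proof in one line by noting that both closure and containment of $e$ are preserved under intersection, but the step-by-step version above is the cleanest to present.
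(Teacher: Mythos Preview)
Your proof is correct and is exactly the standard verification one would expect; the paper itself states this lemma as ``well known'' and gives no proof, so there is nothing to compare against beyond noting that your argument is the routine one.
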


The next theorem describes all solutions to the equation $\mathbf{P}_{\mathbf{X}}=\mathbf{F}.$

\begin{theorem}\label{l1}
The following statements are equivalent for every $\mathbf{X}~\subseteq~\mathbf{M}.$
\begin{itemize}
\item[$(i)$] $\mathbf{X}$ is the empty subclass of $\mathbf{M}.$
\item[$(ii)$] The equality
\begin{equation}\label{eq1_l1}\mathbf{P}_{\mathbf{X}}=\mathbf{F}\end{equation}
holds.
\end{itemize}
\end{theorem}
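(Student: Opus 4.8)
The plan is to prove the two implications separately, with the forward direction $(i)\Rightarrow(ii)$ being essentially a triviality and the reverse direction $(ii)\Rightarrow(i)$ carrying all the content. For $(i)\Rightarrow(ii)$, if $\mathbf{X}=\varnothing$, then the defining implication in Definition~\ref{def1} is vacuously true for every $f\in\mathbf{F}$ and every metric space $(X,d)$, since the hypothesis $(X,d)\in\mathbf{X}$ never holds; hence $\mathbf{P}_{\mathbf{X}}=\mathbf{F}$, which is \eqref{eq1_l1}.

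For $(ii)\Rightarrow(i)$, I would argue by contraposition: assume $\mathbf{X}\ne\varnothing$ and exhibit a function $f\in\mathbf{F}$ with $f\notin\mathbf{P}_{\mathbf{X}}$, so that $\mathbf{P}_{\mathbf{X}}\subsetneq\mathbf{F}$. Pick any $(X,d)\in\mathbf{X}$. The natural candidate for a "bad" $f$ is the constant function $f\equiv c$ for a suitable constant $c>0$, or a function that destroys the positivity property. Concretely, if $X$ has at least two distinct points $x\ne y$, choose $f\in\mathbf{F}$ with $f\equiv 0$ on all of $[0,\infty)$; then $f\circ d$ is identically zero on $X\times X$, so it violates property $(i)$ of a metric (positivity fails since $(f\circ d)(x,y)=0$ while $x\ne y$), and therefore $(X, f\circ d)$ is not a metric space at all, in particular $(X,f\circ d)\notin\mathbf{X}\subseteq\mathbf{M}$. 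Thus this $f$ witnesses $f\notin\mathbf{P}_{\mathbf{X}}$. The remaining case is that every space in $\mathbf{X}$ is a one-point space; then choose instead $f\in\mathbf{F}$ with $f(0)=1$ (say $f\equiv 1$). For $(X,d)$ one-point, $d(x,x)=0$ but $(f\circ d)(x,x)=f(0)=1\ne 0$, again violating positivity, so $(X,f\circ d)\notin\mathbf{M}$ and $f\notin\mathbf{P}_{\mathbf{X}}$.

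To streamline, I would actually handle both cases uniformly: given any nonempty $(X,d)\in\mathbf{X}$, note that $X$ contains at least one point $x_0$, and the function $f\equiv 1$ already fails because $(f\circ d)(x_0,x_0)=f(d(x_0,x_0))=f(0)=1\ne 0$, so $f\circ d$ is never a metric on $X$ and $(X,f\circ d)\notin\mathbf{M}\supseteq\mathbf{X}$. Hence a single choice of $f$ suffices regardless of cardinality, and $\mathbf{P}_{\mathbf{X}}\ne\mathbf{F}$. This completes the contrapositive and hence the equivalence.

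The only mild subtlety — the "main obstacle", such as it is — is making sure the disqualifying $f$ lies in $\mathbf{F}$ and that the argument does not accidentally require $\mathbf{X}$ to contain a space with more than one point; using $f\equiv 1$ and the observation that $(f\circ d)(x_0,x_0)\ne 0$ sidesteps this entirely, since even a one-point space has its single diagonal entry ruined. No semigroup-theoretic input is needed here; the result is purely a consequence of the positivity axiom for metrics together with the vacuous-truth behaviour of the implication in Definition~\ref{def1}.
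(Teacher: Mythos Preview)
Your proposal is correct and takes essentially the same approach as the paper: both directions are handled identically, with the forward direction by vacuous truth and the reverse by observing that a constant function $f\equiv c>0$ (the paper uses arbitrary $c>0$, you settle on $c=1$) forces $(f\circ d)(x_0,x_0)\ne 0$ for any point $x_0$ of any space in $\mathbf{X}$. Your initial two-case discussion is superfluous, as you yourself note, and your streamlined version matches the paper's argument exactly.
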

\begin{proof}
$(i)\Rightarrow (ii).$ Let $\mathbf{X}$ be the empty subclass of $\mathbf{M}.$
Definition~\ref{def1} implies the inclusion $\mathbf{F}\supseteq \mathbf{P}_{\mathbf{X}}.$ Let us consider an arbitrary $f\in\mathbf{F}.$ To prove equality \eqref{eq1_l1} it suffices to show that $f\in\mathbf{P}_{\mathbf{X}}.$ Let us do it. Since $\mathbf{X}$ is empty, the membership relation $(X,d)\in\mathbf{X}$ is false for every metric space $(X, d).$ Consequently, the implication $$((X, d)\in\mathbf{X})\Rightarrow ((X, f\circ d)\in \mathbf{X})$$ is valid for every $(X, d)\in\mathbf{M}.$ It implies $f\in\mathbf{P}_{\mathbf{X}}$ by Definition~\ref{def1}. Equality \eqref{eq1_l1} follows.

$(ii)\Rightarrow (i).$ Let $(ii)$ hold. We must show that $\mathbf{X}$ is empty. Suppose contrary that there is a metric space $(X, d)\in\mathbf{X}.$ Since, by definition, we have $X\ne\varnothing,$ there is a point $x_{0}\in X.$ Consequently, $d(x_0, x_0)=0$ holds. Let $c\in (0, \infty)$ and let $f: [0, \infty)\to [0, \infty)$ be a constant function,
\begin{equation*}\label{eq2_l1}
f(t)=c
\end{equation*}
for every $t\in[0, \infty).$ In particular, we have
\begin{equation}\label{eq3_l1}
f(0)=c>0.
\end{equation}
Equality \eqref{eq1_l1} implies that $f\circ d$ is a metric on $X.$ Thus, we have $$0=f(d(x_0, x_0))=f(0),$$ which contradicts \eqref{eq3_l1}. Statement $(i)$ follows.
\end{proof}

\begin{remark}
Theorem~\ref{l1} becomes invalid if we allow the empty metric space to be considered. The equality $$\mathbf{P}_{\mathbf{X}}=\mathbf{F}$$ holds if the nonempty class $\mathbf{X}$ contains only the empty metric space.
\end{remark}

Let us describe now all possible solutions to $\mathbf{P}_{\mathbf{X}}=\mathbf{F}_{0}.$

\begin{theorem}\label{Th(1)}
The equality
\begin{equation}\label{th_eq1}
\mathbf{P}_{\mathbf{X}}=\mathbf{F}_{0}
\end{equation}
holds if and only if $\mathbf{X}$ is a nonempty subclass of $\mathbf{M}_{1}.$
\end{theorem}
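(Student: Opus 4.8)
The plan is to prove both implications of the equivalence ``\eqref{th_eq1} holds $\iff$ $\mathbf{X}$ is a nonempty subclass of $\mathbf{M}_1$''.

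First I would establish the easy direction: assume $\mathbf{X}$ is a nonempty subclass of $\mathbf{M}_1$, and show $\mathbf{P}_{\mathbf{X}}=\mathbf{F}_0$. For the inclusion $\mathbf{P}_{\mathbf{X}}\subseteq\mathbf{F}_0$, take $f\in\mathbf{P}_{\mathbf{X}}$; pick any $(X,d)\in\mathbf{X}$ (possible since $\mathbf{X}\ne\varnothing$), so $X=\{x_0\}$ is a singleton and $d(x_0,x_0)=0$. Then $f\circ d$ must be a metric on $X$, forcing $f(0)=(f\circ d)(x_0,x_0)=0$, i.e.\ $f\in\mathbf{F}_0$. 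For the reverse inclusion $\mathbf{F}_0\subseteq\mathbf{P}_{\mathbf{X}}$, take any $f\in\mathbf{F}_0$ and any $(X,d)\in\mathbf{X}$; since $X$ is a one-point set, $f\circ d$ trivially satisfies symmetry and the triangle inequality, and $f\circ d(x_0,x_0)=f(0)=0$ gives positivity, so $(X,f\circ d)\in\mathbf{M}_1\subseteq\mathbf{M}$; but we need $(X,f\circ d)\in\mathbf{X}$, not merely in $\mathbf{M}$. Here one observes that a one-point metric space is unique up to the carrier set, so $(X,f\circ d)=(X,d)\in\mathbf{X}$ because on a singleton the only metric is the zero ``metric''; hence the defining implication of $\mathbf{P}_{\mathbf{X}}$ holds and $f\in\mathbf{P}_{\mathbf{X}}$.

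Next I would prove the converse: assume $\mathbf{P}_{\mathbf{X}}=\mathbf{F}_0$ and deduce that $\mathbf{X}$ is a nonempty subclass of $\mathbf{M}_1$. Nonemptiness follows from Theorem~\ref{l1}: if $\mathbf{X}=\varnothing$ then $\mathbf{P}_{\mathbf{X}}=\mathbf{F}\ne\mathbf{F}_0$ (the two sets differ, e.g.\ a nonzero constant function lies in $\mathbf{F}\setminus\mathbf{F}_0$), contradicting the hypothesis. To show $\mathbf{X}\subseteq\mathbf{M}_1$, suppose for contradiction that some $(X,d)\in\mathbf{X}$ has $\card(X)\geqslant 2$, so there are distinct $x,y\in X$ with $a:=d(x,y)>0$. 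Now I would exhibit a function $f\in\mathbf{F}_0$ that is \emph{not} metric preserving on this particular space, i.e.\ such that $(X,f\circ d)\notin\mathbf{X}$ (it suffices to make $f\circ d$ fail to be a metric, since $\mathbf{X}\subseteq\mathbf{M}$). The natural choice is a function vanishing at a positive point: let $f(t)=0$ for $t\le a$ and $f(t)=1$ for $t>a$, say. Then $f\in\mathbf{F}_0$, but $(f\circ d)(x,y)=f(a)=0$ with $x\ne y$ violates the positivity property, so $f\circ d$ is not a metric, hence $(X,f\circ d)\notin\mathbf{X}$. This contradicts $f\in\mathbf{P}_{\mathbf{X}}=\mathbf{F}_0$, so no such $(X,d)$ exists and $\mathbf{X}\subseteq\mathbf{M}_1$.

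The main obstacle is the delicate point in the reverse inclusion of the easy direction: verifying that $(X,f\circ d)$ not only is \emph{a} metric space but actually \emph{belongs to $\mathbf{X}$}. If $\mathbf{X}$ is, say, a proper nonempty subclass of $\mathbf{M}_1$ indexed by which carrier sets are allowed, then since $f\circ d$ lives on the \emph{same} carrier set $X$ as the original $d$, and the metric on a one-point set is uniquely the zero function, we get literally $(X,f\circ d)=(X,d)\in\mathbf{X}$. So the argument goes through for \emph{every} nonempty $\mathbf{X}\subseteq\mathbf{M}_1$, not just $\mathbf{X}=\mathbf{M}_1$. I would make sure to state this carrier-set-preservation observation explicitly, as it is the crux of why the characterization allows all nonempty subclasses of $\mathbf{M}_1$ rather than forcing $\mathbf{X}=\mathbf{M}_1$.
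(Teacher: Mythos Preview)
Your proposal is correct and follows essentially the same route as the paper: both directions are handled by the same mechanisms (positivity on a singleton forces $f(0)=0$; on a singleton $f\circ d=d$ so $(X,f\circ d)\in\mathbf{X}$; nonemptiness via Theorem~\ref{l1}; and exhibiting a function in $\mathbf{F}_0$ that kills positivity when $\card(X)\geqslant 2$). The only cosmetic difference is in the last step: the paper uses the identically zero function $f_0\equiv 0$, which handles all $(X,d)\in\mathbf{X}$ simultaneously without reference to a particular distance value $a$, whereas you build a step function tailored to one chosen pair $x,y$---both work, but the paper's choice is slightly cleaner.
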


\begin{proof}
Let $\mathbf{X}\subseteq\mathbf{M}_{1}$ be nonempty. Equality \eqref{th_eq1} holds iff
\begin{equation}\label{th_eq1_1}
\mathbf{P}_{\mathbf{X}}\supseteq\mathbf{F}_{0}
\end{equation}
and
\begin{equation}\label{th_eq1_2}
\mathbf{P}_{\mathbf{X}}\subseteq\mathbf{F}_{0}.
\end{equation}
Let us prove the validity of \eqref{th_eq1_1}. Let $f\in\mathbf{F}_{0}$ be arbitrary. Since every $(X, d)\in\mathbf{X}$ is an one-point metric space, we have $f\circ d=d$ for all $(X, d)\in\mathbf{X}$ by positivity property of metric spaces, Inclusion \eqref{th_eq1_1} follows.

Let us prove \eqref{th_eq1_2}. The inclusion $\mathbf{P}_{\mathbf{X}}\subseteq\mathbf{F}$ follows from Definition~\ref{def1}. Thus, if \eqref{th_eq1_2} does not hold, then there is $f_{0}\in\mathbf{F}$ such that $f_{0}\in\mathbf{P}_{\mathbf{X}},$
\begin{equation}\label{lem_eq3}
f_{0}(0)=k\quad\mbox{and}\quad k>0.
\end{equation}
Since $\mathbf{X}$ is nonempty, there is $(X_{0}, d_{0})\in\mathbf{X}.$ Let $x_0$ be a (unique) point of $X_{0}.$ Since $f_0$ belongs to $\mathbf{P}_{\mathbf{X}},$ the function $f_{0}\circ d_{0}$ is a metric on $X_0.$ Now, using \eqref{lem_eq3}, we obtain
\begin{equation*}
f_{0}(d_{0}(x_{0}, x_{0}))=f_{0}(0)=k>0,
\end{equation*}
which contradicts the positivity property of metric spaces. Inclusion \eqref{th_eq1_2} follows.

Let \eqref{th_eq1} hold. We must show that $\mathbf{X}$ is a nonempty subclass of $\mathbf{M}_{1}.$ If $\mathbf{X}$ is empty, then
\begin{equation}\label{th_eq1_3}
\mathbf{P}_{\mathbf{X}}=\mathbf{F}
\end{equation}
holds by Theorem~\ref{l1}. Equality \eqref{th_eq1_3} contradicts equality \eqref{th_eq1}. Hence, $\mathbf{X}$ is nonempty. To complete the proof we must show that
\begin{equation}\label{th_eq1_4}
\mathbf{X}\subseteq\mathbf{M}_{1}.
\end{equation}
Let us consider the constant function $f_{0}:[0, \infty)\to[0, \infty)$ such that
\begin{equation}\label{th_eq1_5}
f_{0}(t)=0
\end{equation}
for every $t\in[0,\infty).$ Then $f_{0}$ belongs to $\mathbf{F}_{0}.$ Hence, for every $(X, d)\in\mathbf{X}$, the mapping $d_{0}:=f_{0}\circ d$ is a metric on $X.$ Now \eqref{th_eq1_5} implies $d_{0}(x,y)=0$ for all $x,y\in X$ and $(X, d)\in\mathbf{X}.$ Hence, $\card(X)=1$ holds, because the metric space $(X, d_{0})$ is one-point by positivity property. Inclusion \eqref{th_eq1_4} follows. The proof is completed.
\end{proof}

The next theorem gives us all solutions to the equation $\mathbf{P}_{\mathbf{X}}=\mathbf{Am}.$

\begin{theorem}\label{Th(2)}
The following statements are equivalent for every $\mathbf{X}~\subseteq~\mathbf{M}.$
\begin{itemize}
\item[$(i)$] The inclusion \begin{equation}\label{nth_1}\mathbf{X}\subseteq\mathbf{Dis}\end{equation}
holds, and there is $(Y, \rho)\in\mathbf{X}$ with \begin{equation}\label{nth_2}\card(Y)\geqslant 2,\end{equation} and we have
 \begin{equation}\label{nth_3}\mathbf{Dis}_{X_{1}}\subseteq\mathbf{X}\end{equation} for every $(X_1, d_1)\in\mathbf{X}.$
\item[$(ii)$] The equality \begin{equation}\label{nth_4}\mathbf{P}_{\mathbf{X}}=\mathbf{Am}\end{equation} holds.
\end{itemize}
\end{theorem}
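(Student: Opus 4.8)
The plan is to prove the equivalence $(i)\Leftrightarrow(ii)$ by establishing each implication via a careful choice of test metric spaces and test functions, leaning on Proposition~\ref{Pr(10)} (discreteness is a three-point property) and Theorem~\ref{t2.4} together with the well-known fact that metric preserving functions preserving all discrete spaces are exactly the amenable ones.

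\medskip
\noindent\emph{Direction $(i)\Rightarrow(ii)$.} Assume \eqref{nth_1}, \eqref{nth_2}, \eqref{nth_3}. First I would show $\mathbf{P}_{\mathbf{X}}\subseteq\mathbf{Am}$. If $f\in\mathbf{P}_{\mathbf{X}}$, then $f\in\mathbf{F}$; I need $f^{-1}(0)=\{0\}$. That $f(0)=0$ follows exactly as in the proof of Theorem~\ref{Th(1)}: pick $(Y,\rho)\in\mathbf{X}$ with $\card(Y)\geqslant 2$ (guaranteed by \eqref{nth_2}), note $f\circ\rho$ must be a metric on $Y$, and evaluate at a diagonal pair, so $f(0)=0$. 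For the reverse, suppose $f(a)=0$ for some $a>0$; using the same $(Y,\rho)$ and \eqref{nth_3}, I can pass to the discrete space on $Y$ with constant distance $a$ (which lies in $\mathbf{Dis}_{Y}\subseteq\mathbf{X}$), and then $f$ applied to that metric gives a degenerate distance on a set with at least two points, contradicting positivity. Hence $f\in\mathbf{Am}$. Conversely, to get $\mathbf{Am}\subseteq\mathbf{P}_{\mathbf{X}}$, take $f\in\mathbf{Am}$ and $(X_1,d_1)\in\mathbf{X}$; by \eqref{nth_1} the space is discrete, so $d_1$ takes only the values $0$ and some $k>0$ off the diagonal. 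Then $f\circ d_1$ takes the values $0$ (on the diagonal, since $f(0)=0$) and $f(k)>0$ (off the diagonal, by amenability), i.e.\ $f\circ d_1$ is again a discrete metric on $X_1$, with constant $f(k)$; by \eqref{nth_3}, $(X_1,f\circ d_1)\in\mathbf{Dis}_{X_1}\subseteq\mathbf{X}$. Thus $f\in\mathbf{P}_{\mathbf{X}}$ and \eqref{nth_4} holds. (I should double check that a discrete space with $\card\geqslant 3$ is genuinely a metric space under $f\circ d_1$ — the strong triangle inequality holds trivially for a two-valued symmetric function vanishing exactly on the diagonal, so there is nothing to verify beyond the two-value observation.)

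\medskip
\noindent\emph{Direction $(ii)\Rightarrow(i)$.} Assume $\mathbf{P}_{\mathbf{X}}=\mathbf{Am}$. Since $\mathbf{Am}\ne\mathbf{F}$ and $\mathbf{Am}\ne\mathbf{F}_{0}$, Theorems~\ref{l1} and~\ref{Th(1)} rule out $\mathbf{X}=\varnothing$ and $\mathbf{X}\subseteq\mathbf{M}_1$ nonempty; in particular $\mathbf{X}$ is nonempty and contains some space $(Y,\rho)$ with $\card(Y)\geqslant 2$, which is \eqref{nth_2}. For \eqref{nth_1}: suppose some $(X_1,d_1)\in\mathbf{X}$ is not discrete. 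By Proposition~\ref{Pr(10)} there is a three-point subspace $\{x,y,z\}$ with (say) $d_1(x,y)\ne d_1(x,z)$; I would use this to build an amenable function $f$ with $f\circ d_1$ failing the triangle inequality on $\{x,y,z\}$ — e.g.\ choose $f$ amenable and increasing-free (just amenable suffices) that blows up one of the three pairwise distances while fixing the others, so the triangle inequality is violated, contradicting $f\in\mathbf{P}_{\mathbf{X}}$. One needs to be a little careful that a three-point metric space need not be isometrically embeddable as a restriction in a way that lets an arbitrary reassignment of values stay a metric on the whole of $X_1$ — but since we only need the \emph{subspace} $(\{x,y,z\},f\circ d_1|)$ to fail to be metric, and any subspace of a metric space is metric, the contradiction is immediate. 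Finally for \eqref{nth_3}: given $(X_1,d_1)\in\mathbf{X}$ (necessarily discrete, with some constant $c>0$), and an arbitrary discrete metric $e$ on $X_1$ with constant $c'>0$, define $f\in\mathbf{Am}$ by $f(c)=c'$ and $f(t)=t$ otherwise (amenable since $c,c'>0$); then $f\circ d_1=e$, so $e=f\circ d_1\in\mathbf{X}$ because $f\in\mathbf{P}_{\mathbf{X}}$. This gives $\mathbf{Dis}_{X_1}\subseteq\mathbf{X}$ and completes $(i)$.

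\medskip
\noindent I expect the main obstacle to be the non-discreteness step in $(ii)\Rightarrow(i)$: producing, from an arbitrary non-discrete space in $\mathbf{X}$, a concrete amenable $f$ that provably destroys the metric axioms on some three-point subspace. The cleanest route is to fix the three points with two unequal distances, WLOG order them, and pick $f$ amenable that maps the largest of the three pairwise distances to a very large value while mapping the other two distances to small values, so that the triangle inequality $f(\text{large})\leqslant f(\text{small})+f(\text{small})$ fails; one must only take care that when two of the three distances coincide the same $f$ still separates them from the third — which is possible precisely because all three distances are not equal. Everything else is bookkeeping with positivity and the two-value characterization of discrete metrics.
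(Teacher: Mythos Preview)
Your proposal is correct and follows essentially the same route as the paper: both directions use the same test spaces and test functions, invoke Proposition~\ref{Pr(10)} for the non-discreteness step in $(ii)\Rightarrow(i)$, and appeal to Theorems~\ref{l1} and~\ref{Th(1)} to secure a space with at least two points. Your closing caveat about the isoceles case (where the ``largest distance'' heuristic can fail and one must instead single out the value that differs from the other two) is precisely the relabeling the paper performs implicitly when writing~\eqref{nth_11} and defining $f$ in~\eqref{nth_13}.
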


\begin{proof}
$(i)\Rightarrow (ii).$ Let $(i)$ hold. Equality \eqref{nth_4} holds iff
\begin{equation}\label{nth_5}
\mathbf{P}_{\mathbf{X}}\supseteq\mathbf{Dis}
\end{equation}
and
\begin{equation}\label{nth_6}
\mathbf{P}_{\mathbf{X}}\subseteq\mathbf{Dis}.
\end{equation}
Let us prove \eqref{nth_5}. Inclusion \eqref{nth_5} holds iff we have
\begin{equation}\label{nth_7}
(X_{1}, f\circ d_{1})\in\mathbf{X}
\end{equation}
for all $f\in\mathbf{Am}$ and $(X_1, d_1)\in\mathbf{X}.$ Relation \eqref{nth_7} follows from Theorem~\ref{Th(1)} if $(X_1, d_1)\in\mathbf{M}_{1}.$ To see it we only note that $\mathbf{Am}\subseteq\mathbf{F}_{0}.$ Let us consider the case when $$\card(X_1)\geqslant 2.$$ Since $(X_1, d_1)$ is discrete by \eqref{nth_1}, Definition~\ref{defdiscr} implies that there is $k_{1}\in (0, \infty)$ satisfying $$d_{1}(x,y)=k_{1}$$ for all distinct $x, y\in X_{1}.$ Let $f\in\mathbf{Am}$ be arbitrary. Then $f(k_1)$ is strictly positive and $$f(d_{1}(x,y))=f(k_1)$$ holds for all distinct $x,y\in X_{1}.$ Thus, $f\circ d_{1}$ is discrete metric on $X_{1},$ i.e. we have
\begin{equation}\label{nth_8}
(X_{1}, f\circ d_{1})\in\mathbf{Dis}_{X_1}.
\end{equation}
Now, \eqref{nth_7} follows from \eqref{nth_3} and \eqref{nth_8}.

Let us prove \eqref{nth_6}. To do it we must show that every $f\in\mathbf{P}_{\mathbf{X}}$ is amenable.

Suppose contrary that $f$ belongs to $\mathbf{P}_{\mathbf{X}}$ but the equality
\begin{equation}\label{nth_9}
f(t_1)=0
\end{equation}
holds with some $t_{1}\in (0, \infty).$ By statement $(i)$ we can find $(Y, \rho)\in\mathbf{X}$ such that \eqref{nth_2} and $$\rho(x,y)=t_{1}$$ hold for all distinct $x,y\in Y.$ Now $f\in\mathbf{P}_{\mathbf{X}}$ and $(Y, \rho)\in\mathbf{X}$ imply that $f\circ\rho$ is a metric on $Y.$ Consequently, for all distinct $x,y\in Y,$ we have $$f(\rho(x,y))=f(t_1)>0,$$ which contradicts \eqref{nth_9}. The validity of \eqref{nth_6} follows.

$(ii)\Rightarrow (i).$ Let $\mathbf{X}$ satisfy equality \eqref{eq3_pr}. Since $\mathbf{Am}\ne\mathbf{F}$ holds, the class $\mathbf{X}$ is nonempty by Theorem~\ref{l1}. Moreover, using Theorem~\ref{Th(1)} we see that $\mathbf{X}$ contains a metric space $(X, d)$ with $\card(X)\geqslant 2,$ because $\mathbf{Am}\ne\mathbf{F}_{0}.$

If the inequality $$\card(Y)\leqslant 2$$ holds for every $(Y, \rho)\in\mathbf{X},$ then all metric spaces belonging to $\mathbf{X}$ are discrete (see Example~\ref{ex(9)}). Using the definitions of $\mathbf{Dis}$ and $\mathbf{Am},$ it is easy to prove that for each $(X_{1}, d_{1})\in\mathbf{Dis}$ and every $(X_{1}, d)\in\mathbf{Dis}_{X_1}$ there exists $f\in\mathbf{Am}$ such that $d=f\circ d_{1}.$ Hence to complete the proof it suffices to show that every $(X, d)\in\mathbf{X}$ is discrete when
\begin{equation}\label{nth_10}
\card(X)\geqslant 3.
\end{equation}

Let us consider arbitrary $(X,d)\in\mathbf{X}$ satisfying \eqref{nth_10}. Suppose that $(X,d)\not\in \mathbf{Dis}$. Then by Proposition~\ref{Pr(10)} there are distinct $a,b,c,\in X$ such that
\begin{equation}\label{nth_11}
d(a,b)\ne d(b,c)\ne d(c,a).
\end{equation}
Let $c_{1}$ and $c_{2}$ be points of $(0, \infty)$ such that
\begin{equation}\label{nth_12}
c_{2}>2c_{1}.
\end{equation}
Now we can define $f\in\mathbf{Am}$ as
\begin{equation}\label{nth_13}
f(t) := \left\{
\begin{array}{ll}
0 & \quad \hbox{if}\quad t = 0, \\
c_{2} & \quad \hbox{if}\quad t = d(b,c), \\
c_{1} & \quad \hbox{otherwise}.
\end{array}
\right.
\end{equation}
Equality \eqref{nth_4} implies that $f\circ d$ is a metric on $X$. Consequently, we have
\begin{equation}\label{nth_14}
f(d(b,c))\leqslant f(d(b,a))+f(d(b,c))
\end{equation}
by triangle inequality. Now using \eqref{nth_11} and \eqref{nth_13} we can rewrite \eqref{nth_14} as $$c_{2}\leqslant c_{1}+c_{1},$$
which contradicts \eqref{nth_2}. It implies $(X,d)\in\mathbf{Dis}.$ The proof is completed.
\end{proof}

\begin{corollary} The equalities
$$\mathbf{P}_{\mathbf{Dis}}=\mathbf{P}_{\mathbf{M}_{2}}=\mathbf{Am}$$
hold.
\end{corollary}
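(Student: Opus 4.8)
The plan is to obtain both equalities as immediate applications of Theorem~\ref{Th(2)}, by verifying that each of the two classes $\mathbf{Dis}$ and $\mathbf{M}_2$ satisfies condition~$(i)$ of that theorem; once this is done, the equivalence $(i)\Leftrightarrow(ii)$ of Theorem~\ref{Th(2)} yields exactly $\mathbf{P}_{\mathbf{Dis}}=\mathbf{Am}$ and $\mathbf{P}_{\mathbf{M}_2}=\mathbf{Am}$, hence the chain of equalities in the corollary.

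For $\mathbf{X}=\mathbf{Dis}$ I would check the three requirements of condition~$(i)$ as follows: the inclusion $\mathbf{Dis}\subseteq\mathbf{Dis}$ is trivial; any two-point metric space is discrete (see Example~\ref{ex(9)}) and has cardinality $2\geqslant 2$, which supplies a space $(Y,\rho)\in\mathbf{Dis}$ with $\card(Y)\geqslant 2$; and for every $(X_1,d_1)\in\mathbf{Dis}$ the class $\mathbf{Dis}_{X_1}$ consists, by its very definition, of discrete metric spaces, whence $\mathbf{Dis}_{X_1}\subseteq\mathbf{Dis}$. Thus $\mathbf{Dis}$ satisfies condition~$(i)$, and Theorem~\ref{Th(2)} gives $\mathbf{P}_{\mathbf{Dis}}=\mathbf{Am}$.

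For $\mathbf{X}=\mathbf{M}_2$ I would argue in the same spirit: by Example~\ref{ex(9)} every member of $\mathbf{M}_2$ is discrete, so $\mathbf{M}_2\subseteq\mathbf{Dis}$; the class $\mathbf{M}_2$ is nonempty and every $(Y,\rho)\in\mathbf{M}_2$ has $\card(Y)=2\geqslant 2$; and if $(X_1,d_1)\in\mathbf{M}_2$, then $\card(X_1)=2$, so every metric on $X_1$ — in particular every discrete one — produces a two-point space, i.e.\ $\mathbf{Dis}_{X_1}\subseteq\mathbf{M}_2$. Hence $\mathbf{M}_2$ satisfies condition~$(i)$ of Theorem~\ref{Th(2)}, which gives $\mathbf{P}_{\mathbf{M}_2}=\mathbf{Am}$. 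The only point that deserves a moment's attention is the verification of the inclusion $\mathbf{Dis}_{X_1}\subseteq\mathbf{X}$ in each case, but in both cases it reduces to a one-line observation, so I anticipate no genuine obstacle.
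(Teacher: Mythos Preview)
Your proposal is correct and matches the paper's intended approach: the corollary is stated without proof, as an immediate consequence of Theorem~\ref{Th(2)}, and your argument carries out precisely that verification of condition~$(i)$ for $\mathbf{X}=\mathbf{Dis}$ and $\mathbf{X}=\mathbf{M}_2$. There is nothing missing or in need of revision.
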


\begin{remark}
The equality
\begin{equation*}
\mathbf{P}_{\mathbf{M}_{2}}=\mathbf{Am}
\end{equation*}
is known, see, for example, Remark~1.2 in paper \cite{DM2013}. This paper contains also ``constructive'' characterizations of the smallest bilateral ideal and the largest subgroup of the monoid $\mathbf{P}_{\mathbf{M}}.$
\end{remark}

\begin{proposition}\label{propvsp}
Let $\mathbf{X}$ be a subclass of $\mathbf{M}.$ Then $\mathbf{P}_{\mathbf{X}}$ is a submonoid of $(\mathbf{F}, \circ).$
\end{proposition}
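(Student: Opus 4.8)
The plan is to verify directly that $\mathbf{P}_{\mathbf{X}}$ satisfies the two requirements from Definition~\ref{def2}: that it contains the identity $\id$ of the monoid $(\mathbf{F}, \circ)$, and that it is closed under composition. Since $(\mathbf{F}, \circ)$ is a monoid with identity $\id$ (Example~\ref{ex1}), and $\mathbf{P}_{\mathbf{X}} \subseteq \mathbf{F}$ holds by Definition~\ref{def1}, establishing these two facts is exactly what is needed.

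First I would check that $\id \in \mathbf{P}_{\mathbf{X}}$. For any metric space $(X, d)$ we have $\id \circ d = d$, so if $(X, d) \in \mathbf{X}$ then trivially $(X, \id \circ d) = (X, d) \in \mathbf{X}$; hence the implication in Definition~\ref{def1} holds and $\id \in \mathbf{P}_{\mathbf{X}}$.

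Next I would show closure under composition. Let $f, g \in \mathbf{P}_{\mathbf{X}}$ and let $(X, d)$ be an arbitrary metric space with $(X, d) \in \mathbf{X}$. Applying the defining implication for $g$ gives $(X, g \circ d) \in \mathbf{X}$; note that $g \circ d$ is again a metric on $X$ (this is implicit in $(X, g \circ d)$ being a member of the class $\mathbf{X} \subseteq \mathbf{M}$). Applying the defining implication for $f$ to the space $(X, g \circ d) \in \mathbf{X}$ yields $(X, f \circ (g \circ d)) \in \mathbf{X}$. Since $f \circ (g \circ d) = (f \circ g) \circ d$ as functions on $X \times X$, we conclude $(X, (f \circ g) \circ d) \in \mathbf{X}$, which is exactly the implication needed for $f \circ g \in \mathbf{P}_{\mathbf{X}}$.

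There is no real obstacle here; the argument is a routine unwinding of definitions. The one point requiring a moment's care is the bookkeeping about composition order: the paper's convention $F \circ \Phi$ denotes $X \xrightarrow{F} Y \xrightarrow{\Phi} Z$, so one must be consistent about which function is applied first when chaining the implications, but in either reading the composite of two functions in $\mathbf{P}_{\mathbf{X}}$ lands back in $\mathbf{P}_{\mathbf{X}}$. Having verified closure and membership of $\id$, Definition~\ref{def2} gives that $\mathbf{P}_{\mathbf{X}}$ is a submonoid of $(\mathbf{F}, \circ)$.
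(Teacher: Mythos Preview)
Your proof is correct and follows essentially the same approach as the paper: verify $\id\in\mathbf{P}_{\mathbf{X}}$ directly, then check closure under composition by applying the two preserving functions successively and invoking associativity of $\circ$. The only cosmetic difference is that the paper cites Lemma~\ref{newlem1} in place of your direct appeal to Definition~\ref{def2}, which amounts to the same thing.
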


\begin{proof}
It follows directly from Definition~\ref{def1} that
$$\mathbf{P}_{\mathbf{X}}\subseteq\mathbf{F}$$
holds and that the identity mapping $\textrm{id}:[0,\infty)\to[0,\infty)$ belongs to $\mathbf{P}_{\mathbf{X}}.$ Hence, by Lemma~\ref{newlem1}, it is suffices to prove
\begin{equation}\label{eqv1}
f\circ g\in \mathbf{P}_{\mathbf{X}}
\end{equation}
for all $f, g\in\mathbf{P}_{\mathbf{X}}.$

Let us consider arbitrary $f\in\mathbf{P}_{\mathbf{X}}$ and $g\in\mathbf{P}_{\mathbf{X}}.$ Then, using Definition~\ref{def1}, we see that $(X, g\circ d)$ belongs to $\mathbf{X}$ for every $(X, d)\in\mathbf{X}.$ Consequently,
\begin{equation}\label{eqv2}
(X, f\circ (g\circ d))\in\mathbf{X}
\end{equation}
holds. 
Since the composition of functions is always associative, the equality
\begin{equation}\label{eqv3}
(f\circ g)\circ d=f\circ (g\circ d)
\end{equation}
holds for every $(X, d)\in\mathbf{X}.$ Now \eqref{eqv1} follows from \eqref{eqv2} and \eqref{eqv3}.
\end{proof}

The above proposition implies the following corollary.

\begin{corollary}\label{cor_4}
If the equation $$\mathbf{P}_{\mathbf{X}}=\mathbf{A}$$ has a solution, then $\mathbf{A}$ is a submonoid of $\mathbf{F}.$
\end{corollary}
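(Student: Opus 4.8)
The plan is to read the statement off Proposition~\ref{propvsp} with essentially no extra work. Assume the equation $\mathbf{P}_{\mathbf{X}}=\mathbf{A}$ has a solution, i.e.\ there is a subclass $\mathbf{X}\subseteq\mathbf{M}$ with $\mathbf{P}_{\mathbf{X}}=\mathbf{A}$. First I would apply Proposition~\ref{propvsp} to this particular $\mathbf{X}$: it tells us that $\mathbf{P}_{\mathbf{X}}$ is a submonoid of $(\mathbf{F},\circ)$. Then I would simply substitute the hypothesis $\mathbf{A}=\mathbf{P}_{\mathbf{X}}$ to conclude that $\mathbf{A}$ itself is a submonoid of $(\mathbf{F},\circ)$, which is precisely the assertion of the corollary.

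Concretely, unwinding what ``submonoid'' means (Definition~\ref{def2}), the three facts to record are $\mathbf{A}\subseteq\mathbf{F}$, closure of $\mathbf{A}$ under the operation $\circ$, and $\id\in\mathbf{A}$; but all three are already packaged in the conclusion of Proposition~\ref{propvsp} applied to $\mathbf{X}$, bearing in mind that $\id$ is the identity of the monoid $(\mathbf{F},\circ)$ (Example~\ref{ex1}). So no additional verification is needed at the level of the corollary.

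I do not expect any obstacle here: the sole ingredient is Proposition~\ref{propvsp}, and the passage from it to the corollary is a one-line substitution. (If one preferred not to cite the proposition, one could instead repeat its short argument verbatim---using Lemma~\ref{newlem1} to reduce to checking closure under $\circ$ and then invoking the associativity of function composition---but this would merely duplicate work already carried out.)
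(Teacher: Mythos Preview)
Your proposal is correct and matches the paper's approach exactly: the corollary is stated immediately after Proposition~\ref{propvsp} with the remark that it follows from that proposition, which is precisely the one-line substitution you describe.
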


The following example shows that the converse of Corollary~\ref{cor_4} is, generally speaking, false.

\begin{example}\label{ex(10)}
Let us define $\mathbf{A}_{1}\subseteq\mathbf{F}$ as
\begin{equation*}
\mathbf{A}_{1}=\{f_{1}, \id\},
\end{equation*}
where $f_{1}\in\mathbf{F}$ is defined such that
\begin{equation}\label{eq_f}
f_{1}(t) := \left\{
\begin{array}{ll}
1 & \quad \hbox{if}\quad t = 0, \\
0 & \quad \hbox{if}\quad t = 1, \\
t & \quad \hbox{otherwise}
\end{array}
\right.
\end{equation}
and $\id$ is the identical mapping of $[0, \infty).$ The equalities $f_{1}\circ f_{1}=\id,$ $f_{1}\circ\id=f_{1}=\id\circ f_{1}$ show that $\mathbf{A}_{1}$ is a submonoid of $(\mathbf{F}, \circ)$. Suppose that there is $\mathbf{X}_{1}\subseteq\mathbf{M}$ satisfying the equality
\begin{equation}\label{eq_ex2}
\mathbf{P}_{\mathbf{X}_{1}}=\mathbf{A}_{1}.
\end{equation}
Then using Theorem~\ref{l1}, we see that $\mathbf{X}_{1}$ is nonempty because $\mathbf{A}_{1}\ne\mathbf{F}$ holds. Let $(X_1, d_1)$ be an arbitrary metric space from $\mathbf{A}_{1}.$ Since $X_1$ is nonempty, we can find $x_{1}\in X_{1}$. Then \eqref{eq_ex2} implies that $f_{1}\circ d_{1}$ is metric on $X_1.$ Consequently, we have
\begin{equation*}
f_{1}(d_{1}(x_1, x_1))=f_{1}(0)=0,
\end{equation*}
which contradicts \eqref{eq_f}.
\end{example}

\section{Submonoids of monoids $\mathbf{P}_{\mathbf{M}}$ and $\mathbf{P}_{\mathbf{U}}$}

The following theorem gives a solution to Problem~\ref{probl1}.

\begin{theorem}\label{mainth}
Let $\mathbf{A}$ be a nonempty subset of the set $\mathbf{P}_{\mathbf{M}}$ of all metric preserving functions. Then the following statements are equivalent.
\begin{itemize}
\item[$(i)$] The equality
\begin{equation}\label{theq_1}
\mathbf{P}_{\mathbf{X}}=\mathbf{A}
\end{equation}
has a solution $\mathbf{X}\subseteq \mathbf{M}.$
\item[$(ii)$] $\mathbf{A}$ is a submonoid of $(\mathbf{F}, \circ).$
\item[$(iii)$] $\mathbf{A}$ is a submonoid of $(\mathbf{P}_{\mathbf{M}}, \circ).$
\end{itemize}
\end{theorem}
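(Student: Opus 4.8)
The plan is to establish the cycle of implications $(i)\Rightarrow(ii)\Rightarrow(iii)\Rightarrow(i)$. The implications $(i)\Rightarrow(ii)$ and $(ii)\Leftrightarrow(iii)$ are essentially free: $(i)\Rightarrow(ii)$ is exactly Corollary~\ref{cor_4} (or Proposition~\ref{propvsp}), and the equivalence of $(ii)$ and $(iii)$ follows from Lemma~\ref{newlem1} once we recall from Example~\ref{ex1} that $(\mathbf{P}_{\mathbf{M}},\circ)$ is a submonoid of $(\mathbf{F},\circ)$ with the same identity $\id$; since we are given $\mathbf{A}\subseteq\mathbf{P}_{\mathbf{M}}$, Lemma~\ref{newlem1} applies verbatim. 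So the entire content is the implication $(iii)\Rightarrow(i)$: given a submonoid $\mathbf{A}$ of $(\mathbf{P}_{\mathbf{M}},\circ)$, I must construct a class $\mathbf{X}\subseteq\mathbf{M}$ with $\mathbf{P}_{\mathbf{X}}=\mathbf{A}$.

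The natural candidate is the orbit-type construction: fix one ``generic'' metric space and let $\mathbf{X}$ be the class of all its images under functions from $\mathbf{A}$. Concretely, I would pick a metric space $(Z,\delta)$ that is ``maximally non-degenerate'' — one whose metric takes enough distinct values and whose triangle inequalities are as tight as possible — and set
\begin{equation*}
\mathbf{X}:=\{(Z,\,g\circ\delta): g\in\mathbf{A}\}.
\end{equation*}
Because $\id\in\mathbf{A}$ we get $(Z,\delta)\in\mathbf{X}$, and because $\mathbf{A}$ is closed under composition, $f\in\mathbf{A}$ forces $(Z,(f\circ g)\circ\delta)=(Z,f\circ(g\circ\delta))\in\mathbf{X}$ for every $(Z,g\circ\delta)\in\mathbf{X}$; this gives $\mathbf{A}\subseteq\mathbf{P}_{\mathbf{X}}$ immediately (note every member of $\mathbf{A}$ is metric preserving, so each $g\circ\delta$ really is a metric and $\mathbf{X}\subseteq\mathbf{M}$). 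The reverse inclusion $\mathbf{P}_{\mathbf{X}}\subseteq\mathbf{A}$ is where the real work lies: given $f\in\mathbf{P}_{\mathbf{X}}$, applying $f$ to the single space $(Z,\delta)\in\mathbf{X}$ shows $(Z,f\circ\delta)\in\mathbf{X}$, i.e. $f\circ\delta=g\circ\delta$ for some $g\in\mathbf{A}$. If $\delta$ is chosen so that its set of values $\delta(Z\times Z)$ is all of $[0,\infty)$, then $f\circ\delta=g\circ\delta$ forces $f=g$ on $[0,\infty)$, hence $f=g\in\mathbf{A}$, and we are done.

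The main obstacle is therefore the existence of a single metric space $(Z,\delta)\in\mathbf{M}$ whose distance function is surjective onto $[0,\infty)$, while remaining a genuine metric space that is preserved by every metric preserving function — i.e. such that $g\circ\delta$ is automatically a metric for all $g\in\mathbf{P}_{\mathbf{M}}$, which is automatic since members of $\mathbf{P}_{\mathbf{M}}$ are metric preserving, so the only issue is surjectivity of $\delta$. One clean choice is $Z=[0,\infty)$ with $\delta(x,y)=|x-y|$: the absolute-value metric on the half-line is a metric space in $\mathbf{M}$, and $\delta([0,\infty)\times[0,\infty))=[0,\infty)$, so $f\circ\delta=g\circ\delta$ on $[0,\infty)\times[0,\infty)$ does give $f(t)=g(t)$ for every $t\ge 0$. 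With this $(Z,\delta)$ the argument above goes through. I would also double-check the edge issue that $\mathbf{A}$ is nonempty (given), so $\mathbf{X}$ is a nonempty subclass of $\mathbf{M}$ — consistent with $\mathbf{A}\ne\mathbf{F}$ being unnecessary here because we never invoke Theorem~\ref{l1} in this direction. Assembling these pieces: $(i)\Rightarrow(ii)$ by Corollary~\ref{cor_4}, $(ii)\Leftrightarrow(iii)$ by Lemma~\ref{newlem1} and Example~\ref{ex1}, and $(iii)\Rightarrow(i)$ by the construction $\mathbf{X}=\{([0,\infty),\,g\circ|\cdot-\cdot|):g\in\mathbf{A}\}$, completes the proof.
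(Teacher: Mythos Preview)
Your proof is correct and follows essentially the same route as the paper's: the cycle $(i)\Rightarrow(ii)\Rightarrow(iii)\Rightarrow(i)$, with the orbit construction $\mathbf{X}=\{(Z,g\circ\delta):g\in\mathbf{A}\}$ over a metric space whose distance function is onto $[0,\infty)$. The only difference is cosmetic: you exhibit the concrete space $([0,\infty),|x-y|)$, whereas the paper simply posits a metric space $(X,d)$ satisfying $\{d(x,y):x,y\in X\}=[0,\infty)$ without naming one.
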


\begin{proof}
$(i)\Rightarrow (ii).$ Suppose that there is $\mathbf{X}\subseteq \mathbf{M}$ such that \eqref{theq_1} holds. Then $\mathbf{A}$ is a submonoid of $(\mathbf{F}, \circ)$ by Proposition~\ref{propvsp}. 



$(ii)\Rightarrow (iii).$ Let $\mathbf{A}$ be a submonoid of $(\mathbf{F}, \circ).$ By Proposition~\ref{propvsp}, the monoid $(\mathbf{P}_{\mathbf{M}}, \circ)$ also is a submonoid of $(\mathbf{F}, \circ).$ Then using the inclusion $\mathbf{A}\subseteq\mathbf{P}_{\mathbf{M}}$ we obtain that $\mathbf{A}$ is a submonoid of $(\mathbf{P}_{\mathbf{M}}, \circ)$ by Lemma~\ref{newlem1}.

$(iii)\Rightarrow (i).$ Let $\mathbf{A}$ be a submonoid of $(\mathbf{P}_{\mathbf{M}}, \circ).$ We must prove that \eqref{theq_1} has a solution $\mathbf{X}\subseteq\mathbf{M}.$

Let $(X,d)$ be a metric space such that
\begin{equation}\label{def}
\{d(x,y): x,y\in X\} =[0, \infty).
\end{equation}

Write
\begin{equation}\label{vsp7}
\mathbf{X}:=\{(X, f\circ d): f\in \mathbf{A}\}.
\end{equation}
We claim that \eqref{theq_1} holds if $\mathbf{X}$ is defined by \eqref{vsp7}. To prove it we note that \eqref{theq_1} holds iff
\begin{equation}\label{vsp8}
\mathbf{A}\subseteq\mathbf{P}_{\mathbf{X}}
\end{equation}
and
\begin{equation}\label{vsp9}
\mathbf{A}\supseteq\mathbf{P}_{\mathbf{X}}.
\end{equation}

Let us prove \eqref{vsp8}. This inclusion holds if for every $f\in\mathbf{A}$ and each $(Y, \rho)\in \mathbf{X}$ we have $(Y, f\circ \rho)\in \mathbf{X}.$ Let us consider arbitrary $(Y, \rho)\in \mathbf{X}$ and $f\in\mathbf{A}.$ Then, using \eqref{vsp7}, we can find $g\in\mathbf{A}$ such that
\begin{equation}\label{vsp10}
X=Y \quad\mbox{and}\quad \rho=g\circ d.
\end{equation}
Since $\mathbf{A}$ is a monoid, the membership relations $f\in\mathbf{A}$ and $g\in\mathbf{A}$ imply $g\circ f\in\mathbf{A}.$ Hence, we have
\begin{equation}\label{vsp11}
(X, g\circ f\circ d)\in\mathbf{X}
\end{equation}
by \eqref{vsp7}. Now $(Y, f\circ\rho)\in\mathbf{X}$ follows from \eqref{vsp10} and \eqref{vsp11}.

Let us prove \eqref{vsp9}. Let $g_{1}$ belong to $\mathbf{P}_{\mathbf{X}}$ and let $(X,d)$ be the same as in \eqref{vsp7}. Then $(X, g_{1}\circ d)$ belongs to $\mathbf{X}$ and, using \eqref{vsp7}, we can find $f_{1}\in\mathbf{A}$ such that
\begin{equation}
(X, g_{1}\circ d)=(X, f_{1}\circ d).
\end{equation}
The last equality implies
\begin{equation}\label{**}
g_{1}(d(x,y))=f_{1}(d(x,y))
\end{equation}
for all $x,y\in X.$ Consequently, $g_{1}(t)=f_{1}(t)$ holds for every $t\in[0, \infty)$ by \eqref{def}. Thus, we have $g_{1}=f_{1}.$ That implies $g_{1}\in\mathbf{A}.$ Inclusion \eqref{vsp9} follows. The proof is completed.
\end{proof}

Let us turn now to Question~\ref{quest_1}. Proposition~\ref{prop2.1} and Lemma~\ref{newlem2} give us the following result.






\begin{theorem}\label{mainth_2}
There is $\mathbf{X}\subseteq\mathbf{M}$ such that
\begin{equation}\label{si}
\mathbf{P}_{\mathbf{X}}=\mathbf{SI}.
\end{equation}
\end{theorem}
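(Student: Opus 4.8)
The plan is to combine Proposition~\ref{prop2.1}, which gives $\mathbf{SI}=\mathbf{P}_{\mathbf{U}}\cap\mathbf{P}_{\mathbf{M}}$, with Theorem~\ref{mainth}, which reduces the solvability of $\mathbf{P}_{\mathbf{X}}=\mathbf{A}$ to the purely algebraic condition that $\mathbf{A}$ be a submonoid of $(\mathbf{F},\circ)$. Thus it suffices to verify that $\mathbf{SI}$ is a submonoid of $(\mathbf{F},\circ)$ and that $\mathbf{SI}\subseteq\mathbf{P}_{\mathbf{M}}$, and then to invoke Theorem~\ref{mainth}.

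First I would recall that $\mathbf{P}_{\mathbf{U}}$ and $\mathbf{P}_{\mathbf{M}}$ are both submonoids of $(\mathbf{F},\circ)$; indeed they are monoids with identity $\id$ by Example~\ref{ex1}, and $\id\in\mathbf{F}$, so they are submonoids of $(\mathbf{F},\circ)$. By Lemma~\ref{newlem2} the intersection $\mathbf{P}_{\mathbf{U}}\cap\mathbf{P}_{\mathbf{M}}$ is again a submonoid of $(\mathbf{F},\circ)$. By Proposition~\ref{prop2.1} this intersection equals $\mathbf{SI}$, so $\mathbf{SI}$ is a submonoid of $(\mathbf{F},\circ)$. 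In particular $\mathbf{SI}$ is a nonempty subset of $\mathbf{P}_{\mathbf{M}}$ (it contains $\id$, and more generally $\mathbf{SI}\subseteq\mathbf{P}_{\mathbf{M}}$ by Proposition~\ref{prop2.1}).

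Now I would apply Theorem~\ref{mainth} with $\mathbf{A}=\mathbf{SI}$: since $\mathbf{A}$ is a nonempty subset of $\mathbf{P}_{\mathbf{M}}$ and is a submonoid of $(\mathbf{F},\circ)$, statement $(ii)$ of that theorem holds, hence statement $(i)$ holds, i.e.\ there is $\mathbf{X}\subseteq\mathbf{M}$ with $\mathbf{P}_{\mathbf{X}}=\mathbf{SI}$. This is exactly \eqref{si}.

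There is essentially no obstacle here, since all the hard work is already done in Proposition~\ref{prop2.1} (the characterization $\mathbf{SI}=\mathbf{P}_{\mathbf{U}}\cap\mathbf{P}_{\mathbf{M}}$, which encodes the nontrivial facts that subadditive increasing amenable functions preserve both metrics and ultrametrics and conversely) and in Theorem~\ref{mainth} (the construction of a universal metric space with range $[0,\infty)$). The only point to be careful about is the bookkeeping: one must check that $\mathbf{P}_{\mathbf{U}}$ really is a submonoid of $(\mathbf{F},\circ)$ and not merely of some larger monoid, but this is immediate from Example~\ref{ex1} together with the trivial inclusion $\mathbf{P}_{\mathbf{U}}\subseteq\mathbf{F}$. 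Hence the proof is a short assembly of earlier results, as the text itself indicates by saying that Proposition~\ref{prop2.1} and Lemma~\ref{newlem2} give the result.
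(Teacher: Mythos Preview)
Your proposal is correct and follows essentially the same route as the paper: identify $\mathbf{SI}$ with $\mathbf{P}_{\mathbf{U}}\cap\mathbf{P}_{\mathbf{M}}$ via Proposition~\ref{prop2.1}, use Lemma~\ref{newlem2} to conclude it is a submonoid of $(\mathbf{F},\circ)$, and then invoke Theorem~\ref{mainth}. The only cosmetic difference is that the paper cites Proposition~\ref{propvsp} rather than Example~\ref{ex1} for the fact that $\mathbf{P}_{\mathbf{M}}$ and $\mathbf{P}_{\mathbf{U}}$ are submonoids of $(\mathbf{F},\circ)$.
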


\begin{proof}
By Proposition~\ref{propvsp}, the monoids $(\mathbf{P}_{\mathbf{M}}, \circ)$ and $(\mathbf{P}_{\mathbf{U}}, \circ)$ are submonoids of $(\mathbf{F}, \circ)$. The equality
\begin{equation}\label{si*}
\mathbf{SI}=\mathbf{P}_{\mathbf{M}}\cap\mathbf{P}_{\mathbf{U}}
\end{equation}
holds by Proposition~\ref{prop2.1}. Using \eqref{si*} and Lemma~\ref{newlem2} with $T_{1}=\mathbf{P}_{\mathbf{M}},$ $T_{2}=\mathbf{P}_{\mathbf{U}}$ and $\mathbf{S}=\mathbf{F}$ we see that $\mathbf{SI}$ also is a submonoid of $\mathbf{F}.$ Consequently, Theorem~\ref{mainth} with $\mathbf{A}=\mathbf{SI}$ implies that there is $\mathbf{X}\subseteq \mathbf{M}$ such that \eqref{si} holds.
\end{proof}

The next theorem is an ultrametric analog of Theorem~\ref{mainth} and it gives us a solution to Problem~\ref{probl3}.

\begin{theorem}\label{mainth_4}
Let $\mathbf{A}$ be a nonempty subset of the set $\mathbf{P}_{\mathbf{U}}$ of all ultrametric preserving functions. Then the following statements are equivalent.
\begin{itemize}
\item[$(i)$] The equality
$
\mathbf{P}_{\mathbf{X}}=\mathbf{A}
$
has a solution $\mathbf{X}\subseteq \mathbf{U}.$
\item[$(ii)$] $\mathbf{A}$ is a submonoid of $(\mathbf{F}, \circ).$
\item[$(iii)$] $\mathbf{A}$ is a submonoid of $(\mathbf{P}_{\mathbf{U}}, \circ).$
\end{itemize}
\end{theorem}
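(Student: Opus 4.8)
The plan is to mirror the structure of the proof of Theorem~\ref{mainth}, substituting ultrametric spaces for metric spaces throughout. The implications $(ii)\Rightarrow(iii)$ and $(iii)\Rightarrow(i)$ in the metric case relied only on: Proposition~\ref{propvsp} (that $\mathbf{P}_{\mathbf{X}}$ is always a submonoid of $(\mathbf{F},\circ)$), Lemma~\ref{newlem1}, and—crucially—the existence of a metric space $(X,d)$ whose distance set is all of $[0,\infty)$. So first I would note that $(i)\Rightarrow(ii)$ is immediate from Proposition~\ref{propvsp}, whose proof never uses the triangle inequality and hence applies verbatim to any class $\mathbf{X}\subseteq\mathbf{U}$. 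For $(ii)\Rightarrow(iii)$, I would invoke Lemma~\ref{newlem1} together with the fact that $(\mathbf{P}_{\mathbf{U}},\circ)$ is a submonoid of $(\mathbf{F},\circ)$ (Example~\ref{ex1}) and the hypothesis $\mathbf{A}\subseteq\mathbf{P}_{\mathbf{U}}$; this is word-for-word the same argument as in Theorem~\ref{mainth}.

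The real content is $(iii)\Rightarrow(i)$, and the main obstacle is producing the analog of \eqref{def}: I need an \emph{ultrametric} space $(X,d)$ with $\{d(x,y):x,y\in X\}=[0,\infty)$. The Delhomm\'e--Laflamme--Pouzet--Sauer ultrametric $d^{+}$ of Example~\ref{example_19} does exactly this—on $\mathbb{R}_{0}^{+}=(0,\infty)$ the map $d^{+}(p,q)=\max\{p,q\}$ for $p\ne q$ takes every value in $(0,\infty)$, and $d^{+}(p,p)=0$, so its distance set is $[0,\infty)$. (One should check that for $f\in\mathbf{P}_{\mathbf{U}}$, i.e.\ $f$ increasing and amenable by Theorem~\ref{t2.4}, the composition $f\circ d^{+}$ is again an ultrametric on $(0,\infty)$; but this is precisely the statement $f\in\mathbf{P}_{\mathbf{U}}$, so nothing extra is needed.) Once such an $(X,d)$ is fixed, I define
\begin{equation*}
\mathbf{X}:=\{(X,f\circ d): f\in\mathbf{A}\}\subseteq\mathbf{U},
\end{equation*}
where membership in $\mathbf{U}$ follows from $\mathbf{A}\subseteq\mathbf{P}_{\mathbf{U}}$.

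It then remains to verify $\mathbf{A}\subseteq\mathbf{P}_{\mathbf{X}}$ and $\mathbf{A}\supseteq\mathbf{P}_{\mathbf{X}}$, and both arguments are identical to those in Theorem~\ref{mainth}. For the first inclusion: given $(Y,\rho)\in\mathbf{X}$ and $f\in\mathbf{A}$, write $\rho=g\circ d$ with $g\in\mathbf{A}$, use that $\mathbf{A}$ is a monoid to get $g\circ f\in\mathbf{A}$, and conclude $(Y,f\circ\rho)=(X,(g\circ f)\circ d)\in\mathbf{X}$ by associativity of composition. For the second: if $g_{1}\in\mathbf{P}_{\mathbf{X}}$ then $(X,g_{1}\circ d)\in\mathbf{X}$, so $g_{1}\circ d=f_{1}\circ d$ for some $f_{1}\in\mathbf{A}$; since the distance set of $d$ is all of $[0,\infty)$, this forces $g_{1}=f_{1}\in\mathbf{A}$. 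I expect no genuine difficulty beyond recording that $d^{+}$ (or any surjective-distance-set ultrametric) exists and that all the metric-case manipulations are insensitive to replacing the triangle inequality by the strong triangle inequality, since the class $\mathbf{U}$ enters only through the definition of $\mathbf{P}_{\mathbf{U}}$.
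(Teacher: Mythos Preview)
Your proposal is correct and follows precisely the route the paper indicates: it carries over the proof of Theorem~\ref{mainth} verbatim, with the single substantive change being the use of the Delhomm\'e--Laflamme--Pouzet--Sauer ultrametric $(\mathbb{R}_{0}^{+},d^{+})$ from Example~\ref{example_19} as the space whose distance set is all of $[0,\infty)$. The paper itself says exactly this and no more, so there is nothing to add.
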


A proof of Theorem~\ref{mainth_4} can be obtained by a simple modification of the proof of Theorem~\ref{mainth}. We only note that the ultrametric space defined in Example~\ref{example_19} satisfies the equality \eqref{def} with $X=\mathbb R_{0}^{+}$ and $d=d^{+}$.



\section{Two conjectures}

\begin{conjecture}\label{con1}
The equality
\begin{equation*}
\mathbf{P}_{\mathbf{X}}=\mathbf{A}
\end{equation*}
has a solution $\mathbf{X}\subseteq\mathbf{M}$ for every submonoid $\mathbf{A}$ of the monoid $\mathbf{Am}.$
\end{conjecture}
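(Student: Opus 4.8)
We outline the approach we would take, and indicate where we expect the real difficulty to sit. Since every submonoid of $\mathbf{P}_{\mathbf{M}}$ is realized by Theorem~\ref{mainth}, and every submonoid of $\mathbf{P}_{\mathbf{U}}$ — in particular, by Theorem~\ref{t2.4}, every submonoid of $\mathbf{Am}$ consisting of increasing functions — is realized by Theorem~\ref{mainth_4}, the genuinely new case concerns a submonoid $\mathbf{A}$ of $\mathbf{Am}$ containing non-increasing (and possibly non metric preserving) functions. The plan is to imitate the proof of Theorem~\ref{mainth}, but to replace the single space $(X,d)$ used there by the family of all ``truncations'' of the Delhomm\'{e}--Laflamme--Pouzet--Sauer ultrametric $d^{+}$ from Example~\ref{example_19} on which $\mathbf{A}$ still acts by metrics. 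For $V\subseteq[0,\infty)$ with $0\in V$ let $d^{+}_{V}$ be the restriction of $d^{+}$ (with the value $0$ adjoined) to $V\times V$; then $d^{+}_{V}$ is an ultrametric on $V$ whose value set is exactly $V$. Put
\[
\mathcal{V}_{\mathbf{A}}:=\bigl\{V\subseteq[0,\infty): 0\in V,\ k\circ d^{+}_{V}\text{ is a metric for every }k\in\mathbf{A}\bigr\},
\qquad
\mathbf{X}_{\mathbf{A}}:=\bigl\{(V,\,h\circ d^{+}_{V}): V\in\mathcal{V}_{\mathbf{A}},\ h\in\mathbf{A}\bigr\}.
\]

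The routine verifications are as follows. By definition of $\mathcal{V}_{\mathbf{A}}$ one has $\mathbf{X}_{\mathbf{A}}\subseteq\mathbf{M}$. Since $\mathbf{A}$ is a submonoid and composition is associative, $f\circ(h\circ d^{+}_{V})=(f\circ h)\circ d^{+}_{V}$ with $f\circ h\in\mathbf{A}$, so $\mathbf{X}_{\mathbf{A}}$ is closed under the passage $(V,\rho)\mapsto(V,f\circ\rho)$ for $f\in\mathbf{A}$; hence $\mathbf{A}\subseteq\mathbf{P}_{\mathbf{X}_{\mathbf{A}}}$. For the reverse inclusion one uses that distinct members of $\mathcal{V}_{\mathbf{A}}$ are distinct underlying sets and that $d^{+}_{V}$ is onto $V$: for $h'\in\mathbf{P}_{\mathbf{X}_{\mathbf{A}}}$ and $k\in\mathbf{A}$ the relation $(V,(h'\circ k)\circ d^{+}_{V})\in\mathbf{X}_{\mathbf{A}}$ then forces $(h'\circ k)|_{V}\in\mathbf{A}|_{V}:=\{h|_{V}:h\in\mathbf{A}\}$ for every $V\in\mathcal{V}_{\mathbf{A}}$. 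Consequently the whole conjecture reduces to the purely function-theoretic assertion: \emph{if $h'\in\mathbf{F}$ satisfies $(h'\circ k)|_{V}\in\mathbf{A}|_{V}$ for all $k\in\mathbf{A}$ and all $V\in\mathcal{V}_{\mathbf{A}}$, then $h'\in\mathbf{A}$.} Two observations make this plausible in many cases: first, $\{0,v\}\in\mathcal{V}_{\mathbf{A}}$ for every $v>0$ and $\{0,q,r\}\in\mathcal{V}_{\mathbf{A}}$ whenever $0<q<r$ and $k(q)\leqslant 2k(r)$ for all $k\in\mathbf{A}$, so these sets already pin $h'$ down pointwise and impose the relevant two-point compatibility constraints; second, if every $k\in\mathbf{A}$ satisfies $k(q)\leqslant 2k(r)$ for all $0<q<r$ — which holds in particular when $\mathbf{A}\subseteq\mathbf{P}_{\mathbf{M}}$ or when $\mathbf{A}$ consists of increasing functions — then $[0,\infty)\in\mathcal{V}_{\mathbf{A}}$ and the assertion is immediate (take $k=\id$), so this framework re-proves Theorems~\ref{mainth}, \ref{mainth_4} and~\ref{mainth_2}.

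The hard part is exactly the reduced assertion in full generality. If $\mathbf{A}$ contains a function that oscillates between very small and very large values on every interval, then $\mathcal{V}_{\mathbf{A}}$ collapses to those sets on which no member of $\mathbf{A}$ varies by more than a factor $2$, and it becomes unclear whether the restrictions $h'|_{V}$, $V\in\mathcal{V}_{\mathbf{A}}$, still determine membership in $\mathbf{A}$: the obstacle is to rule out ``gluing together'' pieces of several elements of $\mathbf{A}$ into a new function lying outside $\mathbf{A}$ without any admissible $V$ detecting the seam. I would first settle the reduced assertion for finitely generated submonoids whose generators are piecewise monotone, and then attempt a general localization argument; should that fail, the natural alternatives are to enlarge $\mathbf{X}_{\mathbf{A}}$ beyond $d^{+}$-type spaces so as to enlarge $\mathcal{V}_{\mathbf{A}}$, or to search for a pathological submonoid of $\mathbf{Am}$ showing that Conjecture~\ref{con1} requires an additional hypothesis. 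Essentially all of the difficulty is concentrated in this gluing/localization step.
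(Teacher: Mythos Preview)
This statement is Conjecture~\ref{con1} in the paper and is \emph{left open} there; the paper offers no proof, so there is nothing on the paper's side to compare your attempt against. Your outline is a reasonable research plan: the routine verifications you sketch (that $\mathbf{X}_{\mathbf{A}}\subseteq\mathbf{M}$, that $\mathbf{A}\subseteq\mathbf{P}_{\mathbf{X}_{\mathbf{A}}}$, and that $h'\in\mathbf{P}_{\mathbf{X}_{\mathbf{A}}}$ forces $(h'\circ k)|_V\in\mathbf{A}|_V$ for all $k\in\mathbf{A}$ and $V\in\mathcal{V}_{\mathbf{A}}$) are correct, and you are right that when $[0,\infty)\in\mathcal{V}_{\mathbf{A}}$ the construction collapses to the argument of Theorems~\ref{mainth} and~\ref{mainth_4}.

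But, as you yourself flag, none of this touches the actual content of the conjecture. The ``reduced assertion'' is not proved, and there is no evident reason it should hold in general: nothing in your setup rules out a submonoid $\mathbf{A}\subseteq\mathbf{Am}$ whose elements oscillate so wildly that $\mathcal{V}_{\mathbf{A}}$ shrinks to two-point sets and a sparse family of three-point sets, in which case the restriction conditions $(h')|_V\in\mathbf{A}|_V$ need not determine membership in $\mathbf{A}$. The gluing obstacle you name is exactly the conjecture, and neither your proposal nor the paper resolves it. So what you have written is an honest account of where the difficulty lies, not a proof; it should be presented as such rather than as a proof of Conjecture~\ref{con1}.
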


Example~\ref{ex(10)} shows that we cannot replace $\mathbf{Am}$ with $\mathbf{F}$ in Conjecture~\ref{con1}, but we hope that the following is valid.

\begin{conjecture}\label{con2}
For every submonoid $\mathbf{A}$ of the monoid $\mathbf{F}$ there exists $\mathbf{X}\subseteq\mathbf{M}$ such that $\mathbf{P}_{\mathbf{X}}$ and $\mathbf{A}$ are isomorphic submonoids.
\end{conjecture}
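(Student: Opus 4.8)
The plan is to realize $\mathbf{A}$, up to isomorphism, as the monoid of all $g\in\mathbf{F}$ that stabilise a suitable family $\mathbf{X}$ of metrics living on one common set $X$; for such a family Definition~\ref{def1} reads
$\mathbf{P}_{\mathbf{X}}=\{g\in\mathbf{F}:(X,g\circ d)\in\mathbf{X}\ \text{whenever}\ (X,d)\in\mathbf{X}\}$,
so $\mathbf{P}_{\mathbf{X}}$ is exactly the stabiliser of $\mathbf{X}$ under the left action $g\cdot d:=g\circ d$ of $\mathbf{F}$ on functions $X\times X\to[0,\infty)$. The first, optimistic, route mimics Theorem~\ref{mainth}: choose a metric $d$ on $X$ with $\{d(x,y):x,y\in X\}=[0,\infty)$, produce a monoid embedding $\Phi$ of $\mathbf{A}$ into $\mathbf{P}_{\mathbf{M}}$, and set $\mathbf{X}:=\{(X,\Phi(a)\circ d):a\in\mathbf{A}\}$. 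If $\Phi$ sends the identity of $\mathbf{A}$ to $\id$, then $\Phi(\mathbf{A})$ is a genuine submonoid of $\mathbf{P}_{\mathbf{M}}$ and Theorem~\ref{mainth} gives $\mathbf{P}_{\mathbf{X}}=\Phi(\mathbf{A})\cong\mathbf{A}$.

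Embeddings into $\mathbf{P}_{\mathbf{M}}$ are not hard to come by in the semigroup sense, because $\mathbf{P}_{\mathbf{M}}$ is large: any amenable $f$ whose values on $(0,\infty)$ lie in a fixed interval $[c,2c]$ is metric preserving, since then the largest of any three values $f(a),f(b),f(c)$ is at most twice the smallest and the triangle inequality holds automatically. Fixing a retraction of $(0,\infty)$ onto such an interval and transporting transformations through it embeds the full transformation monoid $T_{[0,\infty)}=\mathbf{F}$ — and hence every submonoid $\mathbf{A}\le\mathbf{F}$ — into $\mathbf{P}_{\mathbf{M}}$ as a subsemigroup.

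The obstruction is the identity. Every such embedding sends the unit of $\mathbf{A}$ to an idempotent $e'$ whose values lie in $[c,2c]$, so $e'\ne\id$, and the orbit construction then yields only $\mathbf{P}_{\mathbf{X}}=\{g:g\circ e'\in\Phi(\mathbf{A})\}$, with $g\mapsto g\circ e'$ a surjection onto $\Phi(\mathbf{A})$ that is not injective. As $g\mapsto g\circ e'$ is injective precisely when $e'$ is onto, and a surjective idempotent of $\mathbf{F}$ equals $\id$, an isomorphism this way forces $\Phi(\id)=\id$. But $\mathbf{P}_{\mathbf{M}}$ admits no identity‑preserving copy of a monoid with nontrivial units of finite order: if $h\in\mathbf{P}_{\mathbf{M}}$ and $h\circ h=\id$, the isosceles triples $(t,t,s)$ with $s\le 2t$ give $h(s)\le 2h(t)$, applying this to $h(p),h(q)$ and using $h\circ h=\id$ yields $p>2q\Rightarrow h(p)>2h(q)$, and together these force $h(v)<h(t)$ whenever $v<t$; thus $h$ is strictly increasing and so $h=\id$. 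In particular the copy of $\mathbb{Z}/2\mathbb{Z}$ in Example~\ref{ex(10)} cannot be embedded into $\mathbf{P}_{\mathbf{M}}$ preserving the identity, so the reduction to $\mathbf{P}_{\mathbf{M}}$ is genuinely insufficient.

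The way forward is to drop the demand that the representing functions be globally metric preserving: $\mathbf{P}_{\mathbf{X}}$ need not be contained in $\mathbf{P}_{\mathbf{M}}$ (for instance $\mathbf{P}_{\mathbf{M}_{1}}=\mathbf{F}_{0}\not\subseteq\mathbf{P}_{\mathbf{M}}$), so its units may well have torsion. I would therefore build $\mathbf{X}$ so that each $a\in\mathbf{A}$ acts as a bijection of $\mathbf{X}$ that is metric preserving only on the metrics actually occurring in $\mathbf{X}$. Two demands pull against each other: to keep $\mathbf{P}_{\mathbf{X}}$ no larger than $\mathbf{A}$, every value of $[0,\infty)$ must occur as a distance somewhere in $\mathbf{X}$, for otherwise $g$ is unconstrained there; yet to retain a non‑metric‑preserving $a$ inside $\mathbf{P}_{\mathbf{X}}$, no single space of $\mathbf{X}$ may display a triangle on which $a$ violates the triangle inequality. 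Reconciling these forces one to spread the distances over many spaces of small range, each tailored to the failures of the elements of $\mathbf{A}$, while simultaneously rigidifying $\mathbf{X}$ so that no $g\notin\mathbf{A}$ stabilises it. Carrying out this construction uniformly for every submonoid $\mathbf{A}$ is the essential difficulty, and the reason the statement is only conjectured; I would first settle it for finite groups such as $\mathbb{Z}/2\mathbb{Z}$ to isolate the right rigid gadget, and then seek a general encoding of the Cayley structure of $\mathbf{A}$ into such families.
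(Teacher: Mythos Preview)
The statement is Conjecture~\ref{con2}; the paper offers no proof of it, only the remark that Example~\ref{ex(10)} blocks the na\"{\i}ve strengthening to equality (Conjecture~\ref{con1} with $\mathbf{Am}$ replaced by $\mathbf{F}$). So there is no paper proof to compare your attempt against.

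Your proposal is likewise not a proof, and you say so explicitly in the final paragraph. What you have written is an exploration of the obvious first strategy---embed $\mathbf{A}$ as a submonoid of $\mathbf{P}_{\mathbf{M}}$ and invoke Theorem~\ref{mainth}---together with a correct identification of why it fails. Your observation that $(\mathbf{P}_{\mathbf{M}},\circ)$ contains no nontrivial involution is valid: from $s\le 2t\Rightarrow h(s)\le 2h(t)$ and its converse (obtained via $h\circ h=\id$) one recovers strict monotonicity of $h$ by interposing, for $v<t$, some $w\in[v/2,t/2)$ to get $h(v)\le 2h(w)<h(t)$; an increasing involution of $[0,\infty)$ is then forced to be $\id$. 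This genuinely rules out identity-preserving embeddings of monoids with torsion units into $\mathbf{P}_{\mathbf{M}}$, so the reduction to Theorem~\ref{mainth} cannot settle the conjecture.

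The remainder of your discussion---the tension between covering all of $[0,\infty)$ by distances in $\mathbf{X}$ and avoiding triangles that witness the non-metric-preserving behaviour of elements of $\mathbf{A}$---is a reasonable diagnosis of the difficulty, but it stops well short of a construction. In particular, you have not produced even a single $\mathbf{X}$ with $\mathbf{P}_{\mathbf{X}}\cong\mathbb{Z}/2\mathbb{Z}$, which you yourself flag as the natural first test case. The proposal should therefore be read as motivation and partial analysis, not as a proof; it is consistent with the paper's stance that the question is open.
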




\section*{Funding information}

The first author was partially supported by a grant from the Simons Foundation (Award 1160640, Presidential Discretionary-Ukraine Support Grants, Viktoriia Bilet). Oleksiy Dovgoshey was supported by a grant of Turku University, Finland.

\end{document}